\documentclass[12pt]{article}
\usepackage{amsmath,amsthm,amssymb,amsfonts}
\usepackage[numbers,sort&compress]{natbib}
\usepackage{color,colordvi}
\usepackage{soul}
\usepackage{fullpage}

\usepackage[letterpaper,top=2cm,bottom=2cm,left=3cm,right=3cm,marginparwidth=2cm]{geometry}

\usepackage{amsmath,amsfonts,amsthm,mathtools,bm}
\usepackage{graphicx}
\usepackage[colorlinks=true, allcolors=blue]{hyperref}
\usepackage[capitalise]{cleveref}
\usepackage{standalone}
\usepackage{indentfirst}
\usepackage{authblk}
\usepackage{diagbox}
\usepackage{changes}
\usepackage{enumitem}
\usepackage{bbm}
\usepackage{orcidlink}

\newtheorem{theorem}{Theorem}[section]

\newtheorem{lemma}[theorem]{Lemma}
\newtheorem{corollary}[theorem]{Corollary}

\newtheorem{remark}[theorem]{Remark}

\newtheorem*{fact*}{Fact}

\newtheorem*{claim*}{Claim}

\newcommand{\T}{\mathbb T}
\newcommand{\R}{\mathbb R}
\newcommand{\Z}{\mathbb Z}
\newcommand{\supp}{\operatorname{supp}}
\newcommand{\Sch}{\mathcal S}

\title{Sharp dispersive estimates for wave equation on  lattice graphs with high dimension}
\author{Zhe You$^a$\footnote{email: y30231280@mail.ecust.edu.cn}~\orcidlink{0009-0006-9769-5372} }
\author{Xiongfeng Zhan$^{b}$\footnote{email: zhanxfmath@163.com }}
\affil{$^a$School of Mathematics, East China University of Science and Technology, Shanghai 200237, China.}
\affil{$^b$School of Mathematics and Statistics, Beijing Jiaotong University, Beijing 100044, China.}

\date{}

\begin{document}
	\maketitle
	
	\begin{abstract}
		Schultz [Comm.~Pure~Appl.~Math., 1998]  first established dispersive estimates for the fundamental solution of the discrete wave equation on lattice graphs $\Z^d$ for $d=2,3$, which can be seen as a discrete analogue of the classical dispersive estimate for the Euclidean wave equation.
		His result was extended by Bi, Cheng and Hua to  $d=4$ and $5$.
		In this paper, we give complete answers to all the remaining  dimensions, including $d=5$.
		Our estimate is sharp for every $d\geq 5$. 
		Moreover, the proof remains valid for the upper bound of the case for $d=4$.
	\end{abstract}
	
	\noindent\textbf{Keywords:}  Wave equation, lattice graphs, dispersive estimates.
	
	\noindent \textbf{Mathematics Subject Classification:} 35L05, 35B40, 35R02
	
	\section{Introduction}\label{sec:1}
	Discrete analogs of the partial differential equations on graphs have attracted much attention in recent years. 
	We refer to the monographs \cite{Bar17,Gri18} for this topic. 
	As an important equation in classical PDEs, the wave equation is also a fundamental object of study in  discrete settings.
	The discrete wave equation on lattice graphs, which can be regarded as a discrete analogue of the Euclidean case, provides a natural way to approximate real physical laws numerically. 
	In physics, they can be used to describe the vibrations of atoms in crystals \cite{Dove93,FH10,Kev11} via monotonic chains. As for general graphs, one can see \cite{FT04,HH24,LX19,LX22} for results on discrete wave equations. 
	
	In the study of nonlinear problems, establishing dispersive inequalities for the underlying linear dispersive equation usually comes first.
	It is well known that the decay rate of the solution to wave equation on Euclidean space $\R^d$ is of order $|t|^{-\frac{d-1}{2}}$, see for example, \cite{GV95,KT98,Str77}.  
	However, it behaves a little bit differently in discrete cases. 
	In this paper, we focus on the discrete version of this problem. 
	Consider the discrete wave equation on the lattice graph $\Z^d$
	\begin{equation*}
		\begin{cases}
			\partial_t^2 u(x,t)-\Delta u(x,t)=F(x,t),\\[4pt]
			u(x,0)=f_1(x),\qquad
			\partial_t u(x,0)=f_2(x),
			\qquad (x,t)\in\mathbb{Z}^d\times\mathbb{R}.
		\end{cases}
	\end{equation*}
	Here the discrete Laplacian $\Delta$ is defined by
	\begin{equation*}
		\Delta f(x)=\sum_{j=1}^d\bigl(f(x+e_j)+f(x-e_j)-2f(x)\bigr)
	\end{equation*}
	with $\{e_j\}_{j=1}^d$ the standard basis of the lattice $\Z^d$.
	The fundamental solution of this equation is given by
	\begin{equation*}\label{eq:kernel}
		G_d(x,t)=\frac{1}{(2\pi)^d}\int_{\T^d}e^{ix\cdot\xi}
		\frac{\sin(t\omega(\xi))}{\omega(\xi)}\,d\xi,
		\quad
		\omega(\xi)=\left(\sum_{j=1}^d(2-2\cos\xi_j)\right)^{1/2},
	\end{equation*}
	where the $d$-dimensional torus $\T^d$ is the cube $[-\pi,\pi]^d$ with opposite sides identified. 
	
	Schultz~\cite{Sch98} proved the sharp estimates of $G_d(x,t)$ in dimensions $2$ and $3$.  
	Bi, Cheng and Hua~\cite{BCH24,BCH26} subsequently proved the sharp $4$-dimensional bound $(1+|t|)^{-3/2}\log(2+|t|)$ and  $5$-dimensional bound $(1+|t|)^{-11/6}$.  
	Their proofs involved the Newton polyhedron.
	They also noticed that the Newton polyhedron can be used not only to interpret the decay
	rates for $d = 2,3,4$, but also to study the most degenerate case for all odd $d \geq 3$.
	However, their approach becomes increasingly complicated in higher dimension.

	The main result of this paper is as follows.
	\begin{theorem}\label{thm:main}
		Let $d\geq5$ and $\sigma_d=\frac{2d+1}{6}$.
		There exist constants $C_d,c_d,T_d>0$ such that
		\begin{equation}\label{eq:uppermain}
			|G_d(x,t)|\leq C_d(1+|t|)^{-\sigma_d},
			\quad (x,t)\in\Z^d\times\R,
		\end{equation}
		and
		\begin{equation*}
			\sup_{x\in\Z^d}|G_d(x,t)|\geq c_d(1+|t|)^{-\sigma_d},
			\quad |t|\geq T_d.
		\end{equation*}
	\end{theorem}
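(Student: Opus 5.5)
Write $\sin(t\omega)/\omega$ and split $G_d=G^{+}-G^{-}$ with
\[
G^{\pm}(x,t)=\frac{1}{2i(2\pi)^d}\int_{\T^d}e^{i(x\cdot\xi\pm t\omega(\xi))}\frac{d\xi}{\omega(\xi)} .
\]
Set $v=x/t$, so the phase is $t\phi_v(\xi)$ with $\phi_v(\xi)=v\cdot\xi\pm\omega(\xi)$. The plan has three parts: localize in $\xi$, treat the singular point $\xi=0$ and the region far from critical points, and do a careful degenerate stationary phase analysis near the worst critical points. The decay rate $\sigma_d=\frac{2d+1}{6}$ suggests where the extremal contribution comes from. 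It looks like $d-2$ directions with nondegenerate (rate $1/2$ each) behaviour plus a two-dimensional block behaving like a $D_4$ or ``$1/3+1/3$'' type singularity. We have $\frac{d-2}{2}+\frac{2}{3}=\frac{3d-2}{6}$, which is wrong. The correct count is $\frac{d-3}{2}+3\cdot\frac13=\frac{d-1}{2}$, also wrong. Instead, $\frac{2d+1}{6}=\frac{d}{3}+\frac16$, so the guess is: all $d$ directions are cubic (rate $1/3$ each), minus a correction of $1/6$ coming from the radial/energy direction.

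More precisely, I would use the $\omega$-level-set representation. Write $\frac{\sin t\omega}{\omega}$ via the coarea formula:
\[
G_d(x,t)=\int_0^{2\sqrt d}\frac{\sin(t\lambda)}{\lambda}\,\widehat{\mu_\lambda}(x)\,d\lambda ,
\]
where $\mu_\lambda$ is the surface measure on $\{\omega=\lambda\}$. Equivalently, use the product structure of the one-dimensional factors. With $\omega^2=\sum_j(2-2\cos\xi_j)$, write
\[
\frac{\sin t\omega}{\omega}=\int_0^t J_0\bigl(\sqrt{t^2-s^2}\,\cdot\bigr)
\]
type subordination, or better, the Schultz trick: $\frac{\sin(t\omega)}{\omega}$ is a function of $\omega^2$, and $e^{-s\omega^2}$ factorizes into products of modified Bessel functions. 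The most robust route for the upper bound is the following.

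1. \textbf{Cutoffs.} Use a smooth partition of unity isolating a neighbourhood of $\xi=0$. There, $\frac1\omega$ is integrable in $d\ge 2$ and the phase behaves like the Euclidean cone. A scaling argument with the Euclidean estimate gives at least $|t|^{-(d-1)/2}$ decay, which is better than $|t|^{-\sigma_d}$ once $d\ge 2$, since $\frac{d-1}{2}\ge\frac{2d+1}{6}$ iff $d\ge 4$. Away from $0$ the amplitude is smooth.

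2. \textbf{Critical points of $\phi_v$.} Away from $0$, nonstationary points are handled by integration by parts. At critical points, the Hessian of $\omega$ has the form
\[
\omega^{-1}\Bigl(\operatorname{diag}(\cos\xi_j)-\frac{\sin\xi\otimes\sin\xi}{\omega^2}\Bigr).
\]
Its degeneracies occur when several $\cos\xi_j$ vanish, i.e.\ $\xi_j=\pm\pi/2$. The worst point is $\xi=(\pi/2,\dots,\pi/2)$, where the diagonal part vanishes entirely and the Hessian has rank one. Taylor expansion there gives a phase of the form
\[
\text{(rank-one quadratic in } \textstyle\sum\eta_j)+\frac{c}{\omega}\sum_j\eta_j^3+\dots
\]
The uniform bound I would aim for is $t^{-1/2}\cdot t^{-(d-1)/3}=t^{-\frac{2d+1}{6}}$. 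Here the $1/2$ comes from the nondegenerate direction $\sum\eta_j$, and $1/3$ comes from each of the $d-1$ transverse directions, where the restricted phase is a sum of cubes (of the form $\sum\eta_j^3$ restricted to $\sum\eta_j=0$). This matches $\sigma_d$ exactly. It is also the lower bound mechanism: choosing $v=\nabla\omega(\pi/2,\dots,\pi/2)$ exactly, the integral asymptotically equals $c\,t^{-\sigma_d}$ times a product of Airy-type integrals at $0$, which is nonzero. This gives the sharp lower bound for $|t|\ge T_d$.

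3. \textbf{Uniformity (main obstacle).} The hard part is proving the upper bound uniformly in $v$ and over all critical point configurations. This includes points where only $k<d$ coordinates sit near $\pm\pi/2$, and it requires the bounds to be stable under perturbation of $v$. I would first change variables to decouple the rank-one direction. Along the level set $\omega=\lambda$, $G$ reduces to a one-dimensional oscillatory integral in $\lambda$ with an inner integral over the level set. For the inner integral I would use a uniform van der Corput / Karpushkin-type bound: phases of the form $\sum_j(a_j\eta_j^2+b_j\eta_j^3)$ with $|b_j|\gtrsim1$ have integrals bounded by $\prod_j\min(t^{-1/3},|ta_j|^{-1/2})\le t^{-1/3}$ per coordinate, uniformly in $a_j$. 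This stability of the $A_2$ singularity lets the coordinate-wise bound pass to the full product.

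I expect the delicate point to be the coupling term $\sin\xi\otimes\sin\xi/\omega^2$, together with showing that this coupling costs only $1/6$ and never more. I would handle it by writing $e^{it\omega}$ through a one-dimensional Fourier representation in $\omega^2$. Since $\omega^2$ is a sum of separable terms, each coordinate then contributes a Bessel factor $J_{x_j}(\cdot)$ with the uniform bound $|J_n(s)|\lesssim s^{-1/3}$. The remaining one-dimensional integral supplies the extra $t^{-1/2}$ by stationary phase in the auxiliary variable.
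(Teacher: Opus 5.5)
\textbf{Upper bound.} Your last paragraph lands on the paper's reduction. Fourier inversion of $F_t(s)=a(s)e^{-it\sqrt{2d+2s}}$ in the scalar variable $s=-\sum_j\cos\xi_j$ gives $K_d^{\mathrm{hi}}(x,t)=(2\pi)^{-d-1}\int_{\R}\widehat F_t(\eta)\prod_j B_{x_j}(\eta)\,d\eta$, with $|B_m|=2\pi|J_m|$. Van der Corput in $s$ then gives $|\widehat F_t(\eta)|\lesssim t^{-1/2}$.

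However, the bookkeeping you propose does not close. Taken literally, $t^{-1/2}$ times $d$ factors of $t^{-1/3}$ would give $t^{-1/2-d/3}$, which the lower bound rules out. The problem is that $\widehat F_t$ has size $t^{-1/2}$ on an $\eta$-range of length $\asymp t$ (the stationary range $\eta\approx-tg'(s)$). So the two bounds you name only give $t\cdot t^{-1/2}\cdot t^{-d/3}=t^{1/2-d/3}$, which loses $t^{2/3}$ against $t^{-\sigma_d}$. Stationary phase in $\eta$ is not available either, because the Bessel product oscillates in $\eta$ in an $x$-dependent way.

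The missing idea is that $|J_m(\eta)|$ reaches $|\eta|^{-1/3}$ only in the transition window $\bigl||m|-|\eta|\bigr|\lesssim|\eta|^{1/3}$. Quantitatively, the paper proves
\[
|B_m(\eta)|\lesssim|\eta|^{-1/2}+|\eta|^{-1/3}\Bigl[\rho\Bigl(\frac{m+\eta}{|\eta|^{1/3}}\Bigr)+\rho\Bigl(\frac{m-\eta}{|\eta|^{1/3}}\Bigr)\Bigr],\qquad \rho(u)=(1+|u|)^{-1/4}.
\]
Expand the product and substitute $\eta=t^{1/3}u$. A term with $k$ transition factors is then bounded by $t^{-d/2+k/6-1/6}\int\prod_{\ell=1}^k\rho(\sigma_\ell u+a_\ell)\,du$ over an interval of length $\lesssim t^{2/3}$. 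By H\"older this integral is $O(1)$ exactly when $k\ge5$, since $\int_{\R}(1+|u|)^{-k/4}du<\infty$ iff $k>4$; this gives $t^{-(3d-k+1)/6}\le t^{-\sigma_d}$. For $k\le4$ the same computation gives $t^{-(d-1)/2}\log(2+t)$, which is $\lesssim t^{-\sigma_d}$ precisely when $d\ge5$. This is where the hypothesis $d\ge5$ enters; your argument never uses it. The same argument also removes any need for the uniform analysis of critical-point configurations of $\phi_v$ that you flag as the main obstacle. Your level-set and Karpushkin-type remarks do not supply a uniform bound on their own.

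\textbf{Lower bound.} Here too there are gaps. In the rescaled variables $\zeta=z+re$ with $z\in e^\perp$, the limiting phase at $\xi^*$ is $\frac{d}{2w_*^3}r^2+\frac{1}{6w_*}\sum_j z_j^3$, with $z$ constrained to $\sum_j z_j=0$. This cubic is not separable (already for $d=3$ it equals $-3z_1z_2(z_1+z_2)$), so the limit is not a product of Airy integrals. Formally it is a Fresnel factor times a multiple of $\int_{\R}\mathrm{Ai}(s)^d\,ds$, and its nonvanishing for odd $d$ needs an argument. That limit is also only conditionally convergent, so passing to it pointwise requires uniform tail bounds in the degenerate regime.

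Moreover, $x$ must be a lattice point near $tv^*$. Since $G_d=-\operatorname{Im}K_d$, the leading term there has the form $c\,t^{-\sigma_d}\sin\bigl(\tfrac{\pi}{2}\sum_j x_j-tw_*+\alpha\bigr)$. For any fixed choice of $x$ this vanishes along a sequence of times, so you would have to exploit shifting $x$ by a unit vector.

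The paper avoids all of this by pairing $G_d(\cdot,t)$ with $W(A_t^{-1}(x-tv^*))e^{-ix\cdot\xi^*}$, where $A_t=t^{1/3}P_\perp+t^{1/2}P_\parallel$ and $\widehat W\ge0$ is supported in a small ball. Because $\det A_t=t^{\sigma_d}$, the pairing is at most $C\,t^{\sigma_d}\sup_x|G_d(x,t)|$. Poisson summation turns it into an integral over a fixed compact set on which $|\Phi_\infty|\le\pi/4$, so the limit constant has positive real part. The $e^{+it\omega}$ half is non-stationary in $r$ and contributes only $O(t^{-1/2})$.
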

	
	Our proof strategy is as follows. 
	First, we decompose the kernel into
	low- and high-frequency parts, and apply the estimates of Schultz to the low-frequency part and the region away from the propagation
	cone. 
	Then, by applying Fourier inversion in a suitable scalar variable, we represent the high-frequency part as a one-parameter superposition of products of one-dimensional oscillatory integrals.
	Uniform estimates for these one-dimensional factors, together with a product-integrability argument, yield the desired upper bound. 
	Finally, to prove the sharpness of the decay rate, we localize near the most degenerate frequency and combine an anisotropic rescaling with the Poisson summation formula to obtain the desired lower bound.
	
	It is worth mentioning that our method also works for the $4$-dimensional bound $(1+|t|)^{-3/2}\log(2+|t|)$ in~\cite{BCH24}, see Corollary~\ref{corollary} and Remark~\ref{remark}.
	
	Two other related models are the discrete Schr\"odinger equation
	\begin{equation*}
		i\partial_tu(x,t)+\Delta u(x,t)=0,
	\end{equation*}
	and the discrete Klein-Gordon equation
	\begin{equation*}
		\partial_t^2u(x,t)-\Delta u(x,t)+m_*^2u(x,t)=0,
	\end{equation*}
	where $m_*>0$ is the mass parameter.
	The discrete Schr\"odinger equation has already been well studied in \cite{SK05}, and the sharp decay rate of its solutions is $|t|^{-\frac{d}{3}}$ on $\Z^d$.  
	Because of the special form of its Green’s function, one can separate variables to reduce the problem to the case $d = 1$. 
	The same argument fails in the case of the discrete Klein–Gordon equation, which makes the situation more difficult.
	It has been studied in \cite{SK05} for $d=1$, \cite{BG17} for $d=2$, and \cite{CI21} for $d=2,3,4$. 
	Moreover, the lack of smoothness of the phase and amplitude at the origin makes the situation for the discrete wave equation harder than that for the discrete Klein–Gordon equation.

	
	A byproduct of this paper is a sharp dispersive estimate for the discrete Klein–Gordon equation in all dimensions $d\geq 5$, which is the same estimate as the one for the discrete wave equation.
	The proof is also the same as that of the discrete wave equation and can be proved by essentially the same argument.
	In fact, it is even simpler since it does not involve the stationary case.
	We do not give the details in this paper.

	\section{Preliminaries}\label{sec:2}
	
	In this section, we recall some tools that will be used later. 
	First we give some notations for convenience.  
	Given two expressions $A$ and $B$, we write $A = O_{p}(B)$ to express that $A \leq C_p B$ for some constant $C_p > 0$ which  may depend on $p$. 
	If the constant $C_p$ is universal, then we omit $p$ in the subscript.
	For convenience, we also write $A \lesssim_p B$ and $A \lesssim B$  as a synonym for $A=O_p(B)$ and $A=O(B)$, respectively.
	We write $A\asymp B$ when $A\lesssim B$ and $B\lesssim A$.

	\subsection{Fourier transform}
	We refer to \cite[Chapter~2]{Gra14} or \cite[Chapter~6]{SS03} for the basic theory of Fourier transform. 
	
	A \emph{multi-index} is a $d$-tuple $\alpha=(\alpha_1,\dots,\alpha_d)$ of nonnegative integers. 
	Let $|\alpha|=\alpha_1+\cdots+\alpha_d$.
	For $x=(x_1,\dots,x_d)\in\R^d$, the monomial $x^\alpha$ is defined by
	$x^\alpha=x_1^{\alpha_1}\cdots x_d^{\alpha_d}$, and the differential operator $\partial^\alpha$ is defined by
	$\partial^\alpha=\frac{\partial^{|\alpha|}}{\partial x_1^{\alpha_1}\cdots\partial x_d^{\alpha_d}}$.
	
	The \emph{Schwartz space} $\Sch(\R^d)$ is the space of all smooth functions $f:\R^d\to\mathbb C$ such that
	\begin{equation*}
		\sup_{x\in\R^d}\bigl|x^\alpha\partial^\beta f(x)\bigr|< +\infty
	\end{equation*}
	for every pair of multi-indices $\alpha,\beta$. 
	In other words, $f$ and all of its derivatives are required to be rapidly decreasing.  
	In particular, for every integer $N\geq0$, there is a constant $C_{N}$ such that
	\begin{equation}\label{eq:rapidly decreasing}
		|f(x)|\leq C_N(1+|x|)^{-N}
	\end{equation}
	for every $x\in \R^d$.
	Let $C_c^\infty(U)$ be the space of all smooth functions on an open set $U\subseteq\R^d$ whose support is compact. 
	It is straightforward to verify that $C_c^\infty(\R^d)\subset \Sch(\R^d)$.
	
	For a function $f\in \Sch(\R^d)$, the \emph{Fourier transform} of $f$ is defined by
	\begin{equation*}
		\widehat f(\eta)=\int_{\R^d}e^{-i\eta\cdot x}f(x)\,d x.
	\end{equation*}
	The Fourier transform is a bijective map of $\Sch(\R^d)$ to itself, whose inversion formula is 
	\begin{equation*}
		f(x)=\frac{1}{(2\pi)^d}\int_{\R^d}e^{i\eta\cdot x}\widehat f(\eta)\,d\eta.
	\end{equation*}
	
	We also need the Poisson summation formula as follows; see also~\cite[Chapter~3, Theorem 3.2.8]{Gra14}.
	
	\begin{lemma}[{\cite[Theorem 3.2.8]{Gra14}}] \label{lem:Poisson formula}
		Let $f\in\Sch(\R^d)$. 
		Then
		$\sum\limits_{n\in\Z^d}f(n)=\sum\limits_{k\in\Z^d}\widehat f(2\pi k)$.
	\end{lemma}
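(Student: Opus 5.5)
The plan is to prove the identity by periodization, checking that the normalization $\widehat f(\eta)=\int_{\R^d}e^{-i\eta\cdot x}f(x)\,dx$ puts the lattice of frequencies at $2\pi\Z^d$. For $f\in\Sch(\R^d)$ I would define the periodized function
\begin{equation*}
F(x)=\sum_{n\in\Z^d}f(x+n),\qquad x\in\R^d .
\end{equation*}
By \eqref{eq:rapidly decreasing} with $N=d+1$, for $x$ in any bounded set $K$ we have $|f(x+n)|\lesssim_{K}(1+|n|)^{-d-1}$. This is summable over $\Z^d$, so the series converges absolutely and uniformly on compact sets. Hence $F$ is continuous and $\Z^d$-periodic, i.e.\ a continuous function on $\R^d/\Z^d$.

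Next I would compute the Fourier coefficients of $F$ on the unit cube $Q=[0,1)^d$. For $k\in\Z^d$, uniform convergence on $Q$ allows exchanging sum and integral. The translates $Q-n$, $n\in\Z^d$, tile $\R^d$, and $e^{-2\pi i k\cdot n}=1$. Unfolding the integral therefore gives
\begin{equation*}
c_k=\int_{Q}F(x)e^{-2\pi i k\cdot x}\,dx=\sum_{n\in\Z^d}\int_{Q+n}f(y)e^{-2\pi i k\cdot y}\,dy=\int_{\R^d}f(y)e^{-2\pi i k\cdot y}\,dy=\widehat f(2\pi k).
\end{equation*}
Since the Fourier transform maps $\Sch(\R^d)$ to itself, $\widehat f\in\Sch(\R^d)$. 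Applying \eqref{eq:rapidly decreasing} to $\widehat f$ then shows $\sum_k|c_k|<\infty$.

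The last step is the only one needing a genuine theorem rather than bookkeeping, and I expect it to be the main (mild) obstacle. I need to know that the absolutely convergent series $\sum_k c_k e^{2\pi i k\cdot x}$ actually equals $F(x)$ pointwise, not merely in some weak sense. I would argue as follows. The series converges uniformly to a continuous periodic function $G$. Exchanging sum and integral shows $G$ has the same Fourier coefficients $c_k$ as $F$. Uniqueness of Fourier coefficients for continuous functions on the torus then gives $F=G$; for instance, the Fejér means of $F-G$ converge uniformly to $F-G$ but are identically zero. Evaluating at $x=0$ yields $\sum_{n}f(n)=F(0)=\sum_k c_k=\sum_k\widehat f(2\pi k)$, which is the claim of Lemma~\ref{lem:Poisson formula}.
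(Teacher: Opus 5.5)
Your proof is correct: periodizing $F(x)=\sum_{n\in\Z^d}f(x+n)$, unfolding to get $c_k=\widehat f(2\pi k)$ (which matches the paper's normalization $\widehat f(\eta)=\int_{\R^d}e^{-i\eta\cdot x}f(x)\,dx$), and then using uniqueness of Fourier coefficients for continuous functions on the torus is the standard argument. The paper does not prove this lemma itself; it cites Grafakos, Theorem 3.2.8, whose proof is this same periodization argument, carried out there under the weaker hypothesis $|f(x)|+|\widehat f(x)|\lesssim(1+|x|)^{-d-\delta}$, which is all your estimates actually use.
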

	
	\subsection{Oscillatory integrals}
	In this subsection, we recall some classical results about oscillatory integrals.
	We begin with a lemma which records conditions under which repeated integration by parts shows that an oscillatory integral is very small.
	\begin{lemma}[{\cite[Lemma 8.1]{BKY13}}]\label{lem:BKY}
		Let $J$ be a fixed bounded closed interval, let 
		$a\in C^\infty(\R)$ with support on $J$, and let $\Phi$ be real-valued and smooth on $J$.
		Suppose that there exist some variables $Y\geq1$ and $R>0$ such that for every $k\geq0$, $\|a^{(k)}\|_{L^\infty(J)}\lesssim_k 1$, and for every $k\geq2$,
		$|\Phi'(s)|\geq R$ and $\|\Phi^{(k)}\|_{L^\infty(J)}\lesssim_kY$.
		Then, for every $A>0$,
		$$
		\left|\int_{\R} e^{i\Phi(s)}a(s)\,ds\right|
		\lesssim_A
		\left(\frac{R}{\sqrt Y}\right)^{-A}+R^{-A}.
		$$
	\end{lemma}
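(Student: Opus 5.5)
The statement is a non-stationary phase estimate, and I would prove it by repeated integration by parts. Write $I=\int_{\R}e^{i\Phi(s)}a(s)\,ds$. First dispose of the trivial regimes. Since $a$ is supported in the fixed interval $J$ and $\|a\|_{L^\infty}\lesssim 1$, we always have $|I|\le |J|\,\|a\|_{L^\infty}\lesssim 1$. So if $R\le 1$ or $R\le\sqrt Y$, the claimed bound holds trivially, because then $R^{-A}\ge 1$ or $(R/\sqrt Y)^{-A}\ge1$. Hence we may assume $R\ge 1$ and $R\ge \sqrt Y$.

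In this regime $\Phi'$ does not vanish on $J$, so we introduce the operator $Lf=\frac{1}{i\Phi'}f'$, which satisfies $L e^{i\Phi}=e^{i\Phi}$. Its formal transpose is
\begin{equation*}
L^{T}g=-\Bigl(\frac{g}{i\Phi'}\Bigr)'.
\end{equation*}
The function $a$ is smooth on $\R$ and supported in $J$, so $a$ and all its derivatives vanish at the endpoints of $J$. The same is true of every iterate $(L^T)^n a$, because each is a combination of derivatives of $a$ with coefficients smooth on $J$. Consequently $n$ integrations by parts produce no boundary terms, and
\begin{equation*}
I=\int_J e^{i\Phi(s)}\,(L^T)^n a(s)\,ds \qquad\text{for every } n\ge0.
\end{equation*}

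The main step is a structural description of $(L^T)^n a$, proved by induction on $n$. I claim it is a finite linear combination, with coefficients depending only on $n$, of terms of the form
\begin{equation*}
\frac{a^{(k)}\,\Phi^{(j_1)}\cdots\Phi^{(j_m)}}{(\Phi')^{\,n+m}},\qquad j_i\ge 2,\quad 0\le m\le n .
\end{equation*}
For the inductive step, apply $L^T$ to such a term. The extra factor $1/\Phi'$ raises the power of $\Phi'$ in the denominator by one. The derivative then falls in one of three places:
\begin{itemize}
\item on $a^{(k)}$, which leaves $m$ unchanged;
\item on some $\Phi^{(j_i)}$, which leaves $m$ unchanged and raises that index;
\item on the power of $\Phi'$, which creates one new factor $\Phi''$ and one more $\Phi'$ in the denominator, so $m$ goes up by one.
\end{itemize}
In every case the term has the stated form at level $n+1$.

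Now use the hypotheses $|\Phi'|\ge R$, $\|\Phi^{(j)}\|_{L^\infty(J)}\lesssim_j Y$ for $j\ge2$, and $\|a^{(k)}\|_{L^\infty}\lesssim_k1$. Each term is then bounded by
\begin{equation*}
\frac{Y^m}{R^{n+m}}=R^{-(n-m)}\Bigl(\frac{Y}{R^2}\Bigr)^{m}\le \max\Bigl(R^{-1},\frac{Y}{R^2}\Bigr)^{n}\le R^{-n}+\Bigl(\frac{R}{\sqrt Y}\Bigr)^{-2n}.
\end{equation*}
Integrating over the bounded interval $J$ gives $|I|\lesssim_n R^{-n}+(R/\sqrt Y)^{-2n}$. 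Choose $n=\lceil A\rceil$. Since $R\ge1$ and $R/\sqrt Y\ge1$, we have $R^{-n}\le R^{-A}$ and $(R/\sqrt Y)^{-2n}\le (R/\sqrt Y)^{-A}$, which gives the claimed estimate.

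The only delicate point is the bookkeeping in the inductive structure. One must verify that every factor of a higher derivative of $\Phi$ is paired with exactly one additional power of $\Phi'$ in the denominator, giving the exponent $n+m$, and that $m$ never exceeds $n$. This pairing is exactly what turns the bound into the two competing scales $R^{-1}$ and $Y/R^2$. Everything else in the argument is routine.
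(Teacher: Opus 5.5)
Your proof is correct. The reduction to $R\ge\sqrt Y\ge 1$ holds, the boundary terms vanish, the induction correctly gives terms $a^{(k)}\Phi^{(j_1)}\cdots\Phi^{(j_m)}/(\Phi')^{n+m}$ with $m\le n$, and these satisfy $Y^mR^{-n-m}\le R^{-n}+(R/\sqrt Y)^{-2n}$. The paper does not prove this lemma itself but cites \cite[Lemma 8.1]{BKY13}, whose proof is this same repeated integration by parts argument, so your approach matches the one the paper relies on.
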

	
	A zero of $\Phi'$ is called a \emph{stationary point}. 
	Near such a point the preceding estimate cannot be applied, and we use the following form of the Van der Corput lemma.
	
	\begin{lemma}[{\cite[p. 334]{Stein93}}]\label{lem:van der Corput}
		Let $J$ be a bounded open interval.
		Let $a\in C^1(J)$, and let
		$\Phi$ be real-valued and smooth on $J$.
		\begin{enumerate}[label=(\roman*)]
			\item If $\Phi'$ is monotone and $|\Phi'|\geq\Lambda>0$ on $J$,
			then
			\[
			\left|\int_J e^{i\Phi(s)}a(s)\,d s\right|
			\lesssim \Lambda^{-1}
			\bigl(\|a\|_{L^\infty(J)}+\|a'\|_{L^1(J)}\bigr).
			\]
			\item If $|\Phi^{(k)}|\geq\Lambda>0$ on $J$ for some $k>1$, then
			\[
			\left|\int_J e^{i\Phi(s)}a(s)\,d s\right|
			\lesssim_k\Lambda^{-1/k}
			\bigl(\|a\|_{L^\infty(J)}+\|a'\|_{L^1(J)}\bigr).
			\]
		\end{enumerate}
	\end{lemma}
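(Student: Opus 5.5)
The plan is to first prove both parts with amplitude $a\equiv1$, with constants independent of $J$ and $\Phi$. The general amplitude then follows from a single integration by parts. Write $J=(\alpha,\beta)$ and set $I(s)=\int_\alpha^s e^{i\Phi(u)}\,du$ for $s\in J$. Every subinterval $(\alpha,s)$ inherits the hypotheses of (i) or (ii), so the $a\equiv1$ estimate gives $\sup_{s\in J}|I(s)|\lesssim\Lambda^{-1}$, respectively $\lesssim_k\Lambda^{-1/k}$. Then
\[
\int_J e^{i\Phi}a\,ds=a(\beta^-)I(\beta)-\int_J a'(s)I(s)\,ds .
\]
Hence the integral is bounded by $\sup_J|I|\,\bigl(\|a\|_{L^\infty(J)}+\|a'\|_{L^1(J)}\bigr)$, which is the claimed form. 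If $a$ is only $C^1$ on the open interval, I would apply this on compact subintervals and pass to the limit.

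For (i) with $a\equiv1$, I write $e^{i\Phi}=\frac{1}{i\Phi'}\frac{d}{ds}e^{i\Phi}$ and integrate by parts:
\[
\int_J e^{i\Phi}\,ds=\Bigl[\frac{e^{i\Phi}}{i\Phi'}\Bigr]_\alpha^\beta-\int_J e^{i\Phi}\frac{d}{ds}\Bigl(\frac{1}{i\Phi'}\Bigr)ds .
\]
The boundary terms are at most $2/\Lambda$. Since $\Phi'$ is monotone and never vanishes (because $|\Phi'|\ge\Lambda$), it has constant sign, so $1/\Phi'$ is monotone. The remaining integral is therefore bounded by the total variation $|1/\Phi'(\beta)-1/\Phi'(\alpha)|\le 1/\Lambda$. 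This gives the bound $3/\Lambda$.

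For (ii) with $a\equiv1$, I argue by induction on $k\ge2$. By continuity $\Phi^{(k)}$ has constant sign; replacing $\Phi$ by $-\Phi$ (complex conjugation) if needed, assume $\Phi^{(k)}\ge\Lambda$.
\begin{itemize}
\item Then $\Phi^{(k-1)}$ is strictly increasing, so there is a point $c\in\overline J$ (the zero of $\Phi^{(k-1)}$ if it has one, otherwise the endpoint where $|\Phi^{(k-1)}|$ is smallest) such that $|\Phi^{(k-1)}(s)|\ge\Lambda|s-c|$.
\item Fix $\delta>0$. On $\{s\in J:|s-c|<\delta\}$ I bound the integral trivially by $2\delta$.
\item The rest of $J$ consists of at most two intervals, on each of which $|\Phi^{(k-1)}|\ge\Lambda\delta$. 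For $k=2$, part (i) applies there, since $\Phi'$ is monotone because $\Phi''>0$; this gives $\lesssim(\Lambda\delta)^{-1}$. For $k\ge3$, the induction hypothesis gives $\lesssim_k(\Lambda\delta)^{-1/(k-1)}$.
\item Altogether the integral is $\lesssim_k\delta+(\Lambda\delta)^{-1/(k-1)}$. Choosing $\delta=\Lambda^{-1/k}$ balances the two terms and gives $\Lambda^{-1/k}$.
\end{itemize}

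The only delicate points are bookkeeping. The constants must stay uniform, i.e. independent of $|J|$ and $\Phi$, which holds because each step uses only $\Lambda$ and the number of subintervals is bounded. The sign-constancy of $\Phi'$ in (i) and of $\Phi^{(k)}$ in (ii) must be justified, which follows from continuity together with the lower bounds. I do not expect a genuine obstacle; the main step is choosing $\delta$ in the induction.
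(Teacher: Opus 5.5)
Your proposal is correct. It is the standard argument from the cited source (Stein, Ch.~VIII, Proposition~2 and its Corollary): integration by parts for (i), induction on $k$ with the $\delta$-splitting around the point where $|\Phi^{(k-1)}|$ is smallest for (ii), and a final integration by parts against $I(s)=\int_\alpha^s e^{i\Phi(u)}\,du$ to handle the amplitude. The paper does not reprove this lemma and only quotes Stein, so there is no different route to compare against.
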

	
	\section{Cutoff and one-dimensional factorization}\label{sec:3}
	Schultz~\cite{{Sch98}} introduced an oscillatory integral to study dispersive estimates for the wave equation on lattice graphs $\Z^d$ for $d=2,3$, which has the form
	\begin{equation*}\label{eq:K_d}
		K_d(x,t)=\frac{1}{(2\pi)^d}\int_{\T^d}e^{ix\cdot\xi-it\omega(\xi)}\frac{d\xi}{\omega(\xi)}.
	\end{equation*}
	The elementary identity $\sin z=(e^{iz}-e^{-iz})/(2i)$ and the symmetry $\omega(-\xi)=\omega(\xi)$ give
	$G_d(x,t)=-\operatorname{Im}K_d(x,t)$.
	Thus an upper bound for $|K_d|$ implies the same upper bound for $|G_d|$.
	
	We now introduce the cutoff that will be used throughout
	this section. 
	A \emph{cutoff} $\chi$ is a smooth function $\chi$  supported near the origin.
	For a suitable cutoff, we can decompose $K_d(x,t)$ as
	$	K_d(x,t)=K_d^{\mathrm{low}}(x,t)+K_d^{\mathrm{hi}}(x,t)$,
	where
	\begin{equation*}
		K_d^{\mathrm{low}}(x,t)
		=
		\frac{1}{(2\pi)^d}
		\int_{\T^d}
		e^{ix\cdot\xi-it\omega(\xi)}
		\frac{\chi(\xi)}{\omega(\xi)}
		\,d\xi
	\end{equation*}
	and
	\begin{equation*}
		K_d^{\mathrm{hi}}(x,t)
		=
		\frac{1}{(2\pi)^d}
		\int_{\T^d}
		e^{ix\cdot\xi-it\omega(\xi)}
		\frac{1-\chi(\xi)}{\omega(\xi)}
		\,d\xi.
	\end{equation*}
	Then we recall the following result of Schultz~\cite[Propositions~2.1--2.3 and 3.10]{Sch98}; see also \cite[Lemma~3.3 and equation~(25)]{BCH24}.  
	
	\begin{lemma}[{\cite{Sch98}}]\label{lem:schultz}
		Let $d\geq 4$.  There exist constants $c_d\in(0,1)$ such that:
		\begin{enumerate}[label=(\roman*)]
			\item if $|x|\geq c_d|t|$, then $|G_d(x,t)|\lesssim_d(1+|t|)^{-d/2}$;
			\item if $|x|\leq c_d|t|$, then $|K_d^{\mathrm{low}}(x,t)|\lesssim_d|t|^{-(d-1)}$ for $|t|\geq1$
			with a cutoff  $\chi$ on $\T^d$.
		\end{enumerate}
	\end{lemma}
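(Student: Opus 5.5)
The plan is to follow Schultz's scheme and treat the two statements separately. Both rest on the phase $\Phi(\xi)=x\cdot\xi-t\omega(\xi)$, whose gradient is $x-t\nabla\omega(\xi)$ with $\partial_j\omega=\sin\xi_j/\omega$. Since $\sin^2\xi_j\le 2-2\cos\xi_j$, we have $|\nabla\omega|\le1$ everywhere. Moreover, $|\nabla\omega(\xi)|\to1$ only as $\xi\to0$; more precisely $|\nabla\omega(\xi)|^2=1-\tfrac14\sum_j\xi_j^4/|\xi|^2+O(|\xi|^4)$, and $|\nabla\omega|\le 1-\kappa_d$ away from a neighbourhood of the origin.

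For (ii), I would take $\chi$ supported in a small ball $|\xi|\le\varepsilon_d$. I would then choose $c_d<1$ so that $|x|\le c_d|t|$ forces
$$|\nabla\Phi|=|x-t\nabla\omega|\ge (1-c_d-O(\varepsilon_d))|t|\gtrsim|t|$$
on $\supp\chi$. Next I would decompose $\chi(\xi)=\sum_{k\ge k_0}\psi(2^k\xi)$ dyadically, so the piece at $|\xi|\sim2^{-k}$ has volume $2^{-kd}$ and amplitude $\omega^{-1}\sim2^k$. Each derivative of the amplitude costs a factor $2^k$, while the derivatives of $\omega$ at scale $2^{-k}$ behave homogeneously. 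Integrating by parts $N$ times with the operator $L=\frac{\nabla\Phi\cdot\nabla}{i|\nabla\Phi|^2}$ then bounds the $k$-th piece by
$$2^{-k(d-1)}\min\bigl(1,(|t|2^{-k})^{-N}\bigr).$$
Summing over $k$, the terms with $2^{k}\le|t|$ contribute $\lesssim|t|^{-(d-1)}$ (geometric sum dominated at $2^k\sim|t|$), and the terms with $2^k\ge|t|$ give the same bound. This yields $|K^{\mathrm{low}}_d|\lesssim_d|t|^{-(d-1)}$.

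For (i), the region $|x|\ge C|t|$ (with $C>1$) is non-stationary everywhere, and I would handle it by Lemma~\ref{lem:BKY}-type integration by parts together with the same dyadic treatment of $1/\omega$ near $0$. This gives $O((1+|x|)^{-N})$, which is better than $(1+|t|)^{-d/2}$. For the intermediate range $c_d|t|\le|x|\le C|t|$ I would use the factorization also used later in the paper. Writing $\lambda=\omega^2=\sum_j(2-2\cos\xi_j)$ and representing $\sin(t\sqrt\lambda)/\sqrt\lambda$ (cut off to $\lambda\in[0,4d]$) as an $s$-Fourier integral $\int m_t(s)e^{is\lambda}\,ds$ gives
$$G_d(x,t)=\int m_t(s)\prod_{j=1}^d\Bigl(e^{2is}\,i^{-x_j}J_{x_j}(2s)\Bigr)ds,$$
so each factor is a one-dimensional Bessel function. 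Since $|x|\ge c_d|t|$, some coordinate satisfies $|x_j|\ge c_d|t|/\sqrt d$. I would then combine the uniform bound $|J_n(r)|\lesssim\min(r^{-1/3},|n|^{-1/3})$, the exponential decay of $J_n(r)$ for $r<|n|(1-\delta)$, and the $r^{-1/2}$ decay away from the turning point $r\approx|n|$. The aim is to bound the product over the relevant $s$-range by $|t|^{-d/2}$.

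The main obstacle is this intermediate range in (i): stationary points with $|\nabla\omega(\xi)|=|x|/|t|\ge c_d$ cluster near the origin, where $1/\omega$ is singular and $\omega$ is not smooth. The Hessian of $\omega$ there is nearly that of $|\xi|$, whose rank is only $d-1$. The gain from $(1+|t|)^{-(d-1)/2}$ to $(1+|t|)^{-d/2}$ must therefore come from the quartic correction $-\tfrac1{12}\sum\xi_j^4$ in $\omega^2$ together with the smallness of the stationary set. I would first try a second dyadic decomposition in $|\xi|$. At scale $|\xi|\sim2^{-k}$, stationary phase in the $d-1$ angular directions gives $(|t|2^{-k})^{-(d-1)/2}$, and van der Corput (Lemma~\ref{lem:van der Corput}(ii) with $k=3$) in the radial direction supplies the remaining factor. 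The piece would then be bounded by $2^{-k(d-1)}(|t|2^{-k})^{-(d-1)/2}\cdot\min(1,\dots)$, and I would need to check that the sum over $k$ is $\lesssim|t|^{-d/2}$ for $d\ge4$. If this bookkeeping fails, I would fall back on the Bessel-product representation above and exploit the $s$-integrability of $\prod_j|J_{x_j}(2s)|$ directly.
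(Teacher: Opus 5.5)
Your part (ii) is sound: the dyadic integration by parts gives $2^{-k(d-1)}\min(1,(|t|2^{-k})^{-N})$ per shell and sums to $|t|^{-(d-1)}$. The far region $|x|\ge C|t|$ is also fine. For $K_d$ your dyadic argument near $\xi=0$ gives $|x|^{-(d-1)}$ rather than $|x|^{-N}$, but that still suffices.

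Fix the order of constants. Part (i) forces $c_d$ to be close to $1$: it must at least exceed the group speed $|\nabla\omega(\xi^*)|=2^{-1/2}$ of the degenerate point in Section~\ref{sec:6}. So the support radius $\varepsilon_d$ of $\chi$ has to be chosen after $c_d$, small in terms of $1-c_d$.

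\textbf{The gap.} The range $c_d|t|\le|x|\le C|t|$ in (i) is the real content of the lemma, and it is not proved. The paper does not prove it either; it quotes \cite{Sch98} and \cite[Lemma~3.3]{BCH24}. You list two routes without completing either, and as stated neither reaches $|t|^{-d/2}$.
\begin{itemize}
\item \emph{Shell-by-shell stationary phase.} On the shell $|\xi|\sim\lambda$, angular stationary phase leaves a radial integral of length $\lambda$ with amplitude $\lambda^{d-2}(|t|\lambda)^{-(d-1)/2}$. Lemma~\ref{lem:van der Corput}(ii) with $k=3$ turns this into $\lambda^{(d-3)/2}|t|^{-(3d-1)/6}$. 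At $\lambda\sim\varepsilon_d$ this exceeds $|t|^{-d/2}$ by a factor $|t|^{1/6}$.
\item \emph{Using radial curvature instead.} With $|\partial_r^2(t\omega)|\asymp|t|\lambda$ you get $\lambda^{(d-4)/2}|t|^{-d/2}$ per shell. This is summable for $d\ge5$, but for $d=4$ it only gives $|t|^{-2}\log|t|$.
\item \emph{The Bessel fallback.} Frequencies with $\omega^2\sim\mu\ll1$ sit at $s\sim|t|\mu^{-1/2}\gg|x|$. There the exponential-decay and turning-point bounds play no role. One has $|m_t(s)|\sim s^{-1/2}$ and $\prod_j|J_{x_j}(2s)|\lesssim s^{-d/2}$, so integrating absolute values over $|t|\lesssim s\lesssim t^2$ yields only $|t|^{-(d-1)/2}$. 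The missing $|t|^{-1/2}$ is exactly the cancellation in $s$ that absolute values throw away.
\end{itemize}

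\textbf{What is missing.} For a given $x$, only one shell carries critical points. Write $|x|=(1-\epsilon)|t|$ and $\hat\xi=\xi/|\xi|$. The expansion $|\nabla\omega|=1-\frac18|\xi|^2\sum_j\hat\xi_j^4+O(|\xi|^4)$, with $\sum_j\hat\xi_j^4\in[1/d,1]$, puts all critical points at $|\xi|\asymp r_0:=\epsilon^{1/2}$ (none if $\epsilon\le0$).
\begin{itemize}
\item \emph{The stationary shell.} There the Hessian of $t\omega$ has $d-1$ angular eigenvalues $\asymp|t|/r_0$ and one radial eigenvalue $\asymp|t|r_0$, since $\partial_r^2\omega\approx-\frac r4\sum_j\hat\xi_j^4$. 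Stationary phase with amplitude $1/\omega\sim r_0^{-1}$ gives $r_0^{(d-4)/2}|t|^{-d/2}\lesssim|t|^{-d/2}$, precisely because $d\ge4$.
\item \emph{The Airy regime.} Shells with $\lambda\lesssim|t|^{-1/3}$ contribute $|t|^{-2(d-1)/3}\le|t|^{-d/2}$. This also absorbs the case $r_0\lesssim|t|^{-1/3}$.
\item \emph{The other shells.} For $\lambda\not\asymp r_0$ you must show a genuine gain, not just a bound. After the angular stationary phase, the radial phase has derivative $\gtrsim|t|\,|r^2-r_0^2|$. Radial integration by parts then gains at least $(|t|\lambda^3)^{-1}$ per step.
\item \emph{Why isotropic integration by parts fails there.} Near the critical direction each step gains only $|\nabla^2\Phi|/|\nabla\Phi|^2\sim(|t|\lambda^5)^{-1}$. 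This is not small for $\lambda\lesssim|t|^{-1/5}$.
\end{itemize}
You identified the right obstacle: the rank-$(d-1)$ Hessian of $|\xi|$ and the role of the quartic term. But this scale localization and a uniform anisotropic stationary-phase argument are exactly the steps the proposal leaves undone.
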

	Now we need to find a suitable cutoff. 
	Set $S(\xi)=-\sum_{j=1}^d\cos\xi_j$.
	Then $ d+S(\xi) =2\sum_{j=1}^{d}\sin^2\frac{\xi_j}{2}$.
	For $\xi\in[-\pi,\pi]^d$, we have
	$ \frac{2}{\pi^2}|\xi|^2\le d+S(\xi)\le \frac12|\xi|^2$.
	For sufficiently small $\delta>0$,
	let $\chi_0$ be a cutoff such that $\chi_0(r)=1$ for $|r|\leq\delta$ and $\chi_0(r)=0 $ for $|r|\geq2\delta$.
	Define $\chi_\delta(\xi)\coloneqq\chi_0(d+S(\xi))$.
	Note that $\operatorname{supp}\chi_\delta
	\subset
	B_{\mathbb T^d}
	\left(0,\pi\sqrt{\delta}\right)$.
	Thus $\chi_\delta$ is still a cutoff on $\T^d$ supported in a sufficiently small neighborhood of the origin.

	In this paper, we always use such a cutoff; that is,
	\begin{equation}\label{eq:K low}
		K_d^{\mathrm{low}}(x,t)
		=
		\frac{1}{(2\pi)^d}
		\int_{\T^d}
		e^{ix\cdot\xi-it\omega(\xi)}
		\frac{\chi_0(d+S(\xi))}{\omega(\xi)}
		\,d\xi
	\end{equation}
	and
	\begin{equation}\label{eq:K high}
		K_d^{\mathrm{hi}}(x,t)
		=
		\frac{1}{(2\pi)^d}
		\int_{\T^d}
		e^{ix\cdot\xi-it\omega(\xi)}
		\frac{1-\chi_0(d+S(\xi))}{\omega(\xi)}
		\,d\xi.
	\end{equation}
	The estimate of~\eqref{eq:K low} is exactly covered by Lemma~\ref{lem:schultz} (ii).  
	Therefore, it suffices to estimate~\eqref{eq:K high}. 
	Define a function $a$ on $[-d,d]$ by setting $a(-d)=0$ and, for $s\in(-d,d]$,
	\begin{equation}\label{eq:as}
		a(s)=\frac{1-\chi_0(d+s)}{\sqrt{2d+2s}}.
	\end{equation}
	The numerator in \eqref{eq:as} vanishes on a neighborhood of $s=-d$, precisely where  the denominator would be singular. Hence $a$ is well-defined and smooth on $[-d,d]$.  
	We extend it smoothly to a compactly supported function in $C_c^\infty((-d,\infty))$, and then extend it by zero to $\R$.
	We also denote this new function by the same notation $a$.
	Hence \eqref{eq:K high} becomes
	\begin{equation}\label{eq:K high2}
		K_d^{\mathrm{hi}}(x,t)
		=\frac{1}{(2\pi)^d}\int_{\T^d}
		e^{ix\cdot\xi-it\omega(\xi)}a(S(\xi))\,d\xi.
	\end{equation}
	
	For $m\in\Z$, let $B_m(\eta)=\int_{-\pi}^{\pi}e^{im\theta-i\eta\cos\theta}\,d\theta$.
	The next lemma is key to the upper bound estimate about one-dimensional factorization. 
	\begin{lemma}\label{lem:factorization}
		For $t\in \R$, let $F_t$ be the function defined by $F_t(s)=a(s)e^{-it \sqrt{2d+2s}}$.
		Then
		\begin{equation}\label{eq:factor}
			K_d^{\mathrm{hi}}(x,t)=\frac{1}{(2\pi)^{d+1}}
			\int_{\R}\widehat F_t(\eta)\prod_{j=1}^dB_{x_j}(\eta)\,d\eta,
		\end{equation}
		and the integral is absolutely convergent.
	\end{lemma}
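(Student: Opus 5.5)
The plan is to write the amplitude-times-phase factor in \eqref{eq:K high2} as a function of the single scalar $s=S(\xi)$, and then expand it by one-dimensional Fourier inversion. On $\T^d$ we have $\omega(\xi)=\sqrt{2d+2S(\xi)}$. Hence the integrand of \eqref{eq:K high2} equals $e^{ix\cdot\xi}F_t(S(\xi))$, where $F_t(s)=a(s)e^{-it\sqrt{2d+2s}}$.

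We first check that $F_t\in C_c^\infty(\R)\subset\Sch(\R)$. Since $a\in C_c^\infty((-d,\infty))$, the function $\sqrt{2d+2s}$ is smooth on a neighbourhood of $\supp a$, because $2d+2s$ is bounded below there. So $F_t$ is smooth and compactly supported. Consequently $\widehat F_t\in\Sch(\R)$, and in particular $|\widehat F_t(\eta)|\lesssim_{t,N}(1+|\eta|)^{-N}$ by \eqref{eq:rapidly decreasing}. The inversion formula then gives, for every real $s$ (in particular $s=S(\xi)\in[-d,d]$),
\[
F_t(S(\xi))=\frac{1}{2\pi}\int_\R e^{i\eta S(\xi)}\widehat F_t(\eta)\,d\eta .
\]

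Next we substitute this into \eqref{eq:K high2} and interchange the $\xi$- and $\eta$-integrals. Fubini applies because $|e^{ix\cdot\xi}e^{i\eta S(\xi)}\widehat F_t(\eta)|=|\widehat F_t(\eta)|$, which is integrable over $\T^d\times\R$: the torus has finite measure and $\widehat F_t\in L^1(\R)$. This yields
\[
K_d^{\mathrm{hi}}(x,t)=\frac{1}{(2\pi)^{d+1}}\int_\R\widehat F_t(\eta)\int_{\T^d}e^{ix\cdot\xi+i\eta S(\xi)}\,d\xi\,d\eta .
\]
Since $S(\xi)=-\sum_j\cos\xi_j$, the exponential factorizes as $\prod_{j}e^{ix_j\xi_j-i\eta\cos\xi_j}$. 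The inner integral over $[-\pi,\pi]^d$ is therefore the product $\prod_{j=1}^dB_{x_j}(\eta)$ by the definition of $B_m$, which gives \eqref{eq:factor}.

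For absolute convergence, note that $|B_m(\eta)|\le 2\pi$ trivially. The integrand of \eqref{eq:factor} is thus bounded by $(2\pi)^d|\widehat F_t(\eta)|$, which is integrable since $\widehat F_t\in\Sch(\R)$. The only point requiring any care is the smoothness of $F_t$ at the left end of its support. This is exactly why $a$ was built with the cutoff $1-\chi_0(d+s)$ vanishing near $s=-d$, and why it was extended inside $C_c^\infty((-d,\infty))$. Once that is in place, the remaining steps are routine.
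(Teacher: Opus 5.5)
Your proof is correct and follows essentially the same route as the paper: you rewrite the integrand as $e^{ix\cdot\xi}F_t(S(\xi))$, apply one-dimensional Fourier inversion to $F_t\in C_c^\infty(\R)$, interchange the integrals by Fubini, and factor the torus integral into $\prod_{j}B_{x_j}(\eta)$, with absolute convergence coming from $|B_m|\le 2\pi$ and $\widehat F_t\in\Sch(\R)$. Your explicit checks that $\omega(\xi)=\sqrt{2d+2S(\xi)}$, that $F_t$ is smooth near $\supp a$, and that $|\widehat F_t|$ is integrable on $\T^d\times\R$ (which justifies Fubini) spell out details the paper leaves implicit.
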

	
	\begin{proof}
		By~\eqref{eq:K high2} and the definition of $F_t$, we obtain
		\begin{align*}
			K_d^{\mathrm{hi}}(x,t)
			&=\frac{1}{(2\pi)^d}\int_{\T^d}
			e^{ix\cdot\xi-it\omega(\xi)}a(S(\xi))\,d\xi\\
			&=\frac{1}{(2\pi)^d}\int_{\T^d}e^{ix\cdot\xi}F_t(S(\xi))\,d\xi.
		\end{align*}
		Since $F_t\in C_c^\infty(\R)\subset \Sch(\R)$, the Fourier inversion formula gives
		$$ F_t(S(\xi))=\frac{1}{2\pi}\int_{\R}\widehat F_t(\eta)e^{i\eta S(\xi)}\,d\eta.
		$$
		Since $S(\xi)=-\sum_{j=1}^d\cos\xi_j$, we have $e^{i\eta S(\xi)}
		=\prod_{j=1}^de^{-i\eta\cos\xi_j}$.
		Thus, separating $\xi_1,\cdots,\xi_d$ gives
		$$
		\int_{\T^d}\prod_{j=1}^d
		e^{ix_j\xi_j-i\eta\cos\xi_j}\,d\xi=
		\prod_{j=1}^d
		\int_{-\pi}^{\pi}e^{ix_j\theta-i\eta\cos\theta}\,d\theta=
		\prod_{j=1}^dB_{x_j}(\eta).
		$$
		By Fubini's theorem,~\eqref{eq:factor} follows immediately.
		Note that $\widehat F_t$ is also in $\Sch(\R)$. 
		Hence absolute convergence follows from \eqref{eq:rapidly decreasing} and the trivial bound $|B_m(\eta)|\leq 2\pi$.
	\end{proof}
	
	We now proceed to estimate $\widehat{F}_t$.    
	
	\begin{lemma}\label{lem:hat F}
		Fix the dimension $d$ and the cutoff $\chi_0$. 
		There exist constants $c_2>c_1>0$ such that
		$|\widehat F_t(\eta)|\lesssim|t|^{-1/2}$,
		for $|t|\geq1$ and $\eta\in\R$, and 
		\begin{equation}\label{eq:int hat F}
			\int_{\R\setminus E_t}|\widehat F_t(\eta)|\,d\eta
			\lesssim_N|t|^{-N} 
		\end{equation}
		for any $N\geq 0$, where $E_t=\{\eta\in\R:c_1|t|\leq|\eta|\leq c_2|t|\}$.
	\end{lemma}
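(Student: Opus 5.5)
We write $\widehat F_t(\eta)=\int_{\R}a(s)\,e^{-i\Phi(s)}\,ds$ with phase
\[
\Phi(s)=\Phi_{t,\eta}(s)=t\sqrt{2d+2s}+\eta s .
\]
Since $a\in C_c^\infty((-d,\infty))$, its support lies in a fixed compact interval $J\subset(-d+\delta',\infty)$, where $\delta'>0$ depends only on $\delta$ through the cutoff $\chi_0$. On $J$ the function $g(s)=\sqrt{2d+2s}$ is smooth with
\[
g'(s)=(2d+2s)^{-1/2}\in[m_1,m_2],\qquad g''(s)=-(2d+2s)^{-3/2},\qquad |g''(s)|\ge m_3>0,
\]
and all derivatives are bounded. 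Here $0<m_1<m_2$ and $m_3$ depend only on $d$ and $\delta$. Hence $\Phi'(s)=t\,g'(s)+\eta$, while $\Phi^{(k)}=t\,g^{(k)}$ for $k\ge2$.

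\textbf{Uniform bound.} We have $|\Phi''|\ge m_3|t|$ on $J$. Lemma~\ref{lem:van der Corput}(ii) with $k=2$, applied on the interior of $J$ with amplitude $a$, gives $|\widehat F_t(\eta)|\lesssim |t|^{-1/2}\bigl(\|a\|_\infty+\|a'\|_{L^1}\bigr)\lesssim|t|^{-1/2}$, uniformly in $\eta$. Note that $a$ and $J$ are independent of $t$, so the constant is uniform.

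\textbf{Choice of constants.} Take $c_1=m_1/2$ and $c_2=2m_2$. For $|\eta|\notin[c_1|t|,c_2|t|]$ the phase is non-stationary, with a quantitative lower bound on $|\Phi'|$ in two regimes.
\begin{itemize}
\item[(i)] If $|\eta|<c_1|t|$, then $|\Phi'|\ge m_1|t|-|\eta|\ge \tfrac{m_1}{2}|t|$.
\item[(ii)] If $|\eta|>c_2|t|$, then $|\Phi'|\ge |\eta|-m_2|t|\ge \tfrac12|\eta|$, and also $|\eta|\ge 2m_2|t|\gtrsim |t|$.
\end{itemize}
In both cases we apply Lemma~\ref{lem:BKY} with $Y=|t|$, which is legitimate since $\|\Phi^{(k)}\|_\infty\lesssim_k|t|$ for $k\ge2$. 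We take $R=\tfrac{m_1}{2}|t|$ in case (i) and $R=\tfrac12|\eta|$ in case (ii). Then $R/\sqrt Y\gtrsim |t|^{1/2}$ in case (i), and $R/\sqrt Y\gtrsim |\eta|/|t|^{1/2}\gtrsim |\eta|^{1/2}$ in case (ii), because $|t|\lesssim|\eta|$ there. This yields, for every $A>0$,
\[
|\widehat F_t(\eta)|\lesssim_A |t|^{-A/2}\quad\text{in case (i)},\qquad |\widehat F_t(\eta)|\lesssim_A |\eta|^{-A/2}\quad\text{in case (ii)}.
\]

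\textbf{Integration.} The region of case (i) has length $\lesssim|t|$, so its contribution is $\lesssim_A |t|^{1-A/2}$. Case (ii) contributes $\int_{|\eta|\gtrsim|t|}|\eta|^{-A/2}\,d\eta\lesssim |t|^{1-A/2}$. Choosing $A=2N+2$ gives \eqref{eq:int hat F}.

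\textbf{Main obstacle.} The only delicate point is the large-$|\eta|$ regime (ii), where $Y=|t|$ and $R\sim|\eta|$ are unrelated a priori. There we need $R/\sqrt Y$ to grow in $|\eta|$ in order to get integrability. This is handled by exploiting $|t|\lesssim|\eta|$ as above. An alternative, if preferred, is to rescale the phase by $|\eta|$ and integrate by parts directly.
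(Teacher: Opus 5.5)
Your proposal is correct and follows essentially the same route as the paper: the uniform bound comes from Lemma~\ref{lem:van der Corput}~(ii) with $k=2$, using $|tg''|\gtrsim|t|$, and the tail bound comes from Lemma~\ref{lem:BKY} with $Y\asymp|t|$ and $R$ comparable to $|t|$ plus the distance of $\eta$ from the stationary range, followed by integration. The differences are cosmetic: the paper first reduces to $t>0$ and merges your two regimes into the single quantity $D(\eta,t)=t+\operatorname{dist}(\eta,[-Mt,-mt])$ to get $|\widehat F_t(\eta)|\lesssim_L D(\eta,t)^{-L}$, whereas you handle $|\eta|<c_1|t|$ and $|\eta|>c_2|t|$ separately for both signs of $t$.
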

	
	\begin{proof}
		By symmetry, we may assume that $t>0$. 
		By the Fourier transform, we have
		$$
		\widehat F_t(\eta)=\int_{\R} a(s)e^{-i(tg(s)+\eta s)}\,ds
		$$
		with $g(s)=\sqrt{2d+2s}$.
		On the fixed support of $a$, the point $s=-d$ is excluded.
		We also have $g'(s)=(2d+2s)^{-1/2}$ and $g''(s)=-(2d+2s)^{-3/2}$.
		Hence there are constants $0<m<M<+\infty$ and $c>0$ such that
		\begin{equation}\label{eq:g derivative}
			m\leq g'(s)\leq M\quad \text{and} \quad |g''(s)|\geq c
		\end{equation}
		on $\supp a$.  Hence the second derivative of  $tg(s)+\eta s$ is $t g''(s)$ and satisfies $|t g''(s)|\geq ct$ on $\supp a$. By Lemma~\ref{lem:van der Corput} (ii), $|\widehat F_t(\eta)|\lesssim|t|^{-1/2}$ holds.
		
		Then we prove~\eqref{eq:int hat F}.
		As a stationary point can occur only when $\eta=-tg'(s)$, necessarily we consider the  possible stationary  interval $-Mt\leq \eta\leq -mt$.
		We claim that $tg'(s)+\eta$ is uniformly bounded away from zero outside this interval.  
		Enlarge the possible stationary interval slightly as $I_t=[-(M+1)t,-(m/2)t]$.
		If $\eta>-(m/2)t$, then by \eqref{eq:g derivative}, we have $tg'(s)+\eta\geq mt+\eta>\frac{m}{2}t$.
		If $\eta<-(M+1)t$, then by~\eqref{eq:g derivative} again, we have $tg'(s)+\eta\leq Mt+\eta<-t$.
		Thus, we obtain $|tg'(s)+\eta|\geq c t $
		with some constant $c>0$ independent of $s,t$ and $\eta$ outside $I_t$. 
		We also require another lower bound for $|tg'(s)+\eta|$.  If $\eta>-mt$, then
		$$
		|tg'(s)+\eta|=tg'(s)+\eta\geq mt+\eta
		=\operatorname{dist}(\eta,[-Mt,-mt]).
		$$
		If $\eta<-Mt$, then
		$$
		|tg'(s)+\eta|=-(tg'(s)+\eta)\geq-\eta-Mt
		=\operatorname{dist}(\eta,[-Mt,-mt]).
		$$
		Combining two lower bounds, for $\eta\notin I_t$, we get
		\begin{equation*}
			|tg'(s)+\eta|\geq \frac{1}{2}\Bigl(ct+\operatorname{dist}(\eta,[-Mt,-mt])\Bigr)\gtrsim D(\eta,t),
		\end{equation*}
		where $D(\eta,t)=t+\operatorname{dist}(\eta,[-Mt,-mt])$.
		On the fixed support of $a$, all derivatives of $a$ are uniformly
		bounded, while every derivative of $tg(s)+\eta s$ of order
		at least two is $O(t)$.
		Therefore, we may apply Lemma~\ref{lem:BKY} with $Y\asymp t$, $R\asymp D(\eta,t)$, and $A=2L$.  
		Since
		$D(\eta,t)\geq t\geq1$ outside $I_t$, it follows that
		\begin{equation}\label{eq:F bound}
			|\widehat F_t(\eta)| \lesssim_L\left(\frac{D(\eta,t)}{\sqrt{t}}\right)^{-2L}
			+D(\eta,t)^{-2L}
			\lesssim_L D(\eta,t)^{-L}
		\end{equation}
		for every $L\geq1$ and $\eta\notin I_t$.
		Now choose $0<c_1<m/2$ and $c_2>M+1$.  
		Then the possible stationary interval $[-Mt,-mt]$ is contained in $E_t$, and every $\eta\notin E_t$ lies outside $I_t$.  
		By taking $L=N+2$ and integrating two sides of~\eqref{eq:F bound} over $\R\setminus E_t$, we obtain~\eqref{eq:int hat F}.
	\end{proof}
	
	\section{The upper bound}\label{sec:4}

	The phase of $B_m(\eta)$
	is $\Phi(\theta)=m\theta-\eta\cos\theta$.
	Its first three derivatives are
	$\Phi'(\theta)=m+\eta\sin\theta$, 
	$\Phi''(\theta)=\eta\cos\theta$, and $\Phi'''(\theta)=-\eta\sin\theta$.
	Let $\rho(u)=(1+|u|)^{-1/4}$.
	Before we prove the upper bound of $|G_d(x,t)|$, we provide two lemmas about the estimates of $B_m$ and $\rho$.
	We first estimate the upper bound of $B_m$ based on $\rho$.
	\begin{lemma}\label{lem:B bound}
		For every $m\in\Z$ and every $|\eta|\geq1$,
		\begin{equation*}\label{eq:B bound}
			|B_m(\eta)|
			\lesssim |\eta|^{-1/2}
			+|\eta|^{-1/3}\left[
			\rho\!\left(\frac{m+\eta}{|\eta|^{1/3}}\right)
			+\rho\!\left(\frac{m-\eta}{|\eta|^{1/3}}\right)
			\right].
		\end{equation*}
	\end{lemma}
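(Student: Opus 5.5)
We prove the bound by a partition of $[-\pi,\pi]$ adapted to the critical points of $\Phi(\theta)=m\theta-\eta\cos\theta$. By the symmetry $\theta\mapsto -\theta$ (which sends $m$ to $-m$ and leaves $\cos\theta$ fixed), and by replacing $\eta$ with $-\eta$ via $\theta\mapsto\theta+\pi$, we may assume $\eta\geq1$ and $m\geq 0$. Then only the term $\rho((m-\eta)/\eta^{1/3})$ matters; the other one is merely allowed to appear.

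\textbf{Case 1: $m\geq 2\eta$.} Here $|\Phi'|\geq m-\eta\geq \eta$ everywhere. The integrand is $2\pi$-periodic, so we may integrate by parts without boundary terms, or apply Lemma~\ref{lem:van der Corput}(i) on the finitely many monotonicity intervals of $\Phi'=m+\eta\sin\theta$. Either way $|B_m|\lesssim \eta^{-1}$, which is acceptable.

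\textbf{Case 2: $m\leq 2\eta$.} Stationary points satisfy $\sin\theta=-m/\eta$. Split the circle into three kinds of arcs.
\begin{enumerate}[label=(\alph*)]
\item Arcs where $|\cos\theta|\geq \epsilon$ for a fixed small $\epsilon$. There $|\Phi''|\geq\epsilon\eta$, so Lemma~\ref{lem:van der Corput}(ii) with $k=2$ gives $O(\eta^{-1/2})$. A smooth partition of unity supplies the amplitude, whose $C^1$ norms are uniformly bounded.
\item The arc near $\theta=\pi/2$. Here $|\Phi'''|=\eta|\sin\theta|\geq \eta/2$, so $k=3$ gives $O(\eta^{-1/3})$. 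This already covers the regime $|m-\eta|\lesssim \eta^{1/3}$, where $\rho\asymp1$.
\item The arc near $\theta=-\pi/2$. Here $\Phi'=m+\eta\sin\theta\geq m\ge0$ has no useful stationary point; since $m,\eta\geq 0$ the minimum of $\Phi'$ over this arc is at least $m-\eta+\eta(1-|\sin\theta|)\cdot$. We need to be careful: the stationary point $\sin\theta=-m/\eta$ lies near $-\pi/2$ exactly when $m\approx\eta$. So this is in fact the arc carrying the degenerate point, and the arc near $+\pi/2$ is the harmless one, with $|\Phi'|\geq m+\eta/2\gtrsim\eta$.
\end{enumerate}

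The real work is therefore near $\theta_0=-\pi/2$, with $u=m-\eta$ and $\theta=-\pi/2+\phi$. Then $\Phi'=m-\eta\cos\phi=u+\eta(1-\cos\phi)\approx u+\eta\phi^2/2$.

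If $u\leq -C\eta^{1/3}$, there are two nondegenerate stationary points with $|\phi|\approx\sqrt{2|u|/\eta}$, where $|\Phi''|=\eta|\sin\phi|\approx\sqrt{\eta|u|}$. Van der Corput (ii) with $k=2$ on a neighborhood of size $\sim\sqrt{|u|/\eta}$ gives $(\eta|u|)^{-1/4}=\eta^{-1/3}(|u|/\eta^{1/3})^{-1/4}$, which is exactly the $\rho$ term. On the complementary region $|\phi|\gtrsim\sqrt{|u|/\eta}$ the same second-derivative bound also holds, by monotonicity of $|\sin\phi|$.

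If $u\geq C\eta^{1/3}$, then $\Phi'\geq u$ and $\Phi'$ is monotone on each side of $\phi=0$. Van der Corput (i) gives $u^{-1}\leq\eta^{-1/3}(u/\eta^{1/3})^{-1/4}$, since $u/\eta^{1/3}\geq1$.

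If $|u|\lesssim\eta^{1/3}$, use $k=3$, as $|\Phi'''|=\eta|\cos\phi|\gtrsim\eta$.

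The expected obstacle is gluing these regions with smooth cutoffs whose derivatives cost $\sqrt{\eta/|u|}$ in $L^1$ norm, which would spoil the $\|a'\|_{L^1}$ factor. To avoid this I would use sharp interval splitting rather than smooth cutoffs, taking the amplitude $a\equiv1$ on each subinterval so that $\|a'\|_{L^1}=0$. Van der Corput applies directly to intervals with constant amplitude, and only boundedly many subintervals are needed; this keeps every constant uniform in $m$ and $\eta$.
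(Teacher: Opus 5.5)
Your structure is essentially the paper's. You use a $k=2$ van der Corput bound away from $\pm\pi/2$, a $k=3$ bound in the window $|u|\lesssim\eta^{1/3}$, the first-derivative test for the favourable sign of $u$, and a split at scale $\sqrt{|u|/\eta}$ for the unfavourable sign. The reduction to $\eta\geq1$, $m\geq0$ and the sharp splitting with amplitude $1$ are both legitimate.

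\textbf{The gap} is in the case $u\leq -C\eta^{1/3}$. You treat neighbourhoods of the stationary points $\pm\phi_0$, $\phi_0\approx\sqrt{2|u|/\eta}$, and the outer region $|\phi|\gtrsim\sqrt{|u|/\eta}$, using $|\Phi''|=\eta|\sin\phi|\gtrsim\sqrt{\eta|u|}$. The inner region $|\phi|\leq c\sqrt{|u|/\eta}$ between the two stationary points is never handled. It contains $\phi=0$, where $\Phi''$ vanishes, so the second-derivative bound fails there; monotonicity of $|\sin\phi|$ only gives lower bounds for \emph{larger} $|\phi|$. The other tools you have do not suffice either. The $k=3$ bound gives $\eta^{-1/3}$, and the trivial bound gives the interval length $\sqrt{|u|/\eta}$. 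These exceed $(\eta|u|)^{-1/4}$ by the factors $(|u|/\eta^{1/3})^{1/4}$ and $(|u|/\eta^{1/3})^{3/4}$ respectively, and both factors are large in this regime.

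\textbf{The fix} is the first-derivative test on that region. If $|\phi|\leq c_-\sqrt{|u|/\eta}$ with $c_-\leq1$, then $\eta(1-\cos\phi)\leq\eta\phi^2/2\leq|u|/2$, so $\Phi'=u+\eta(1-\cos\phi)\leq-|u|/2$. Since $\Phi'$ is monotone on each side of $0$, Lemma~\ref{lem:van der Corput}(i) gives $O(|u|^{-1})$. This is $\lesssim(\eta|u|)^{-1/4}$ precisely because $|u|\gtrsim\eta^{1/3}$. It is the part $|y|\leq c_-y_0$ of the set $I_1$ in the paper's Subcase B.1.

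Split the arc $|\phi|\leq\delta$ at $\min\{\delta,c_-\sqrt{|u|/\eta}\}$: use the first-derivative test inside, and $|\Phi''|\gtrsim\eta|\phi|\gtrsim\sqrt{\eta|u|}$ outside. This also covers the regime $|u|\gtrsim\delta^2\eta$. There your claim of two stationary points at $|\phi|\approx\sqrt{2|u|/\eta}$ inside the arc is false, because they leave the arc; this is the paper's Subcase B.2.

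Two smaller points.
\begin{enumerate}
\item The obstacle you anticipated does not exist. A cutoff $\chi(\phi/\ell)$ at scale $\ell=\sqrt{|u|/\eta}$ has $\|\partial_\phi[\chi(\cdot/\ell)]\|_{L^1}=\|\chi'\|_{L^1}$, independent of $\ell$; only its sup norm is of size $\ell^{-1}$. So smooth cutoffs would work equally well. The paper keeps the smooth amplitude $\psi$, splits its support sharply, and uses $\|\psi'\|_{L^1}\lesssim1$.
\item Items (b)--(c) should be cleaned up. With $m\geq0$, the arc near $+\pi/2$ needs no $k=3$ argument, since $\Phi'\geq m+\eta/2$ there. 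Part (i) after splitting at $\pi/2$ gives $O(\eta^{-1})$.
\end{enumerate}
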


	\begin{proof}
		Choose smooth functions $\psi_+$ and $\psi_-$ supported in sufficiently small neighborhoods of $\pi/2$ and $-\pi/2$, respectively, such that $\psi_+=1$ near $\pi/2$  and $\psi_-=1$ near $-\pi/2$.
		Set $\psi_0=1-\psi_+-\psi_-$.
		Then $1=\psi_++\psi_-+\psi_0$. 
		Since $\psi_0$ vanishes in neighborhoods of
		$\pm \pi/2$, we have
		$\operatorname{supp}\psi_0 \Subset
		\mathbb{T}\setminus
		\left\{-\frac{\pi}{2},\frac{\pi}{2}\right\}$.
		Since $\pm\pi/2$ are the only zeros of $\cos\theta$  on $\mathbb{T}$, there exist an open neighborhood $U$ of $\operatorname{supp}\psi_0$ and a constant $\kappa>0$ such that $ |\cos\theta|\geq \kappa$ for $\theta\in U$.
		By the compactness of $\operatorname{supp}\psi_0$, we may choose finitely many open intervals
		$J_1,\ldots,J_N\subset\mathbb{T}$ such that $\operatorname{supp}\psi_0\subset\bigcup_{\ell=1}^N J_\ell$ and $\overline{J_\ell}\subset U$ for $1\leq \ell\leq N$.
		Using the partition of unity theorem to the finite open
		cover $\mathbb{T}=
		\bigl(\mathbb{T}\setminus \operatorname{supp}\psi_0\bigr)  \cup J_1\cup\cdots\cup J_N$,  there exist nonnegative functions
		$\chi_\infty,\chi_1,\cdots,\chi_N
		\in C^\infty(\mathbb{T})$
		such that $ \operatorname{supp}\chi_\infty  \subset \mathbb{T}\setminus \operatorname{supp}\psi_0$, $\operatorname{supp}\chi_\ell\subset J_\ell$ for $1\leq\ell\leq N$, and $\chi_\infty+\sum_{\ell=1}^N\chi_\ell=1$ on $\mathbb{T}$.
		Define $ \psi_\ell\coloneqq\psi_0\chi_\ell$ for $1\leq\ell\leq N$.
		Then $\psi_\ell\in C_c^\infty(J_\ell)$.
		Moreover,  $\psi_0\chi_\infty=0$.
		Therefore, we obtain that
		\begin{align*}
			\psi_0&=\psi_0\left(\chi_\infty+\sum_{\ell=1}^N\chi_\ell\right) =\sum_{\ell=1}^N\psi_0\chi_\ell=\sum_{\ell=1}^N\psi_\ell.
		\end{align*}
		For every $1\leq\ell\leq N$ and every
		$\theta\in\operatorname{supp}\psi_\ell$, we have
		$\theta\in J_\ell\subset U$, and hence
		$ |\Phi''(\theta)|=|\eta\cos\theta| \geq\kappa|\eta|$.
		Applying Lemma~\ref{lem:van der Corput} (ii) to each local piece and summing them,
		we obtain
		\begin{equation}\label{eq:B key1}
			\int_{-\pi}^{\pi} e^{i(m\theta-\eta\cos\theta)}\psi_0(\theta)\,d\theta\lesssim|\eta|^{-1/2}.
		\end{equation}
		
		Now it suffices to deal with $\psi_+$, and the case of $\psi_-$ is similar.  The corresponding piece is $\int_{-\pi}^{\pi} e^{i(m\theta-\eta\cos\theta)}\psi_+(\theta)\,d\theta$.
		Set $\theta=\frac{\pi}{2}+y$ and $\psi(y)=\psi_+\!\left(\frac{\pi}{2}+y\right) $.
		Thus $\psi\in C_c^\infty(\R)$ and
		$\supp\psi\subset[-\delta,\delta]$
		for a fixed small $\delta>0$.  Since $\cos\left(\frac{\pi}{2}+y\right)=-\sin y$, $\Phi(\theta)=m\theta-\eta\cos\theta$ becomes
		$m\left(\frac{\pi}{2}+y\right)+\eta\sin y
		=\frac{m\pi}{2}+my+\eta\sin y$.
		The constant $m\pi/2$ only contributes to the factor $e^{im\pi/2}$, whose absolute value is $1$, so we can ignore it and consider the new phase $\phi(y)=my+\eta\sin y$. 
		Let $q=m+\eta$ and $\varepsilon=\operatorname{sgn}(\eta)\in\{\pm1\}$.
		Write $\phi(y)=qy+\varepsilon|\eta|(\sin y-y)$.
		We now prove that
		\begin{equation}\label{eq:B key2}
			\left|\int_{-\delta}^{\delta} e^{i\phi(y)}\psi(y)\,d y\right|
			\lesssim |\eta|^{-1/3}\rho\!\left(\frac{q}{|\eta|^{1/3}}\right).
		\end{equation}
		For a fixed constant $C_0>0$, we divide the proof into two cases.
		
		{\flushleft \bf Case A.} $|q|\leq C_0|\eta|^{1/3}$.
		
		In this case,
		\begin{equation}\label{eq:rho q}
			\rho\!\left(\frac{q}{|\eta|^{1/3}}\right)
			=\left(1+\left|\frac{q}{|\eta|^{1/3}}\right|\right)^{-1/4}
			\geq(1+C_0)^{-1/4}.
		\end{equation}
		Since $\phi'''(y)=-\varepsilon|\eta|\cos y$ and $\supp\psi\subset[-\delta,\delta]$, we have  $|\phi'''(y)|\gtrsim|\eta|$.
		Therefore, by Lemma~\ref{lem:van der Corput} (ii), we obtain
		$	\left|\int_{-\pi}^{\pi} e^{i\phi(y)}\psi(y)\,d y\right|\lesssim |\eta|^{-1/3}$,
		which, together with~\eqref{eq:rho q}, implies~\eqref{eq:B key2}.
		
		{\flushleft \bf Case B.} $|q|>C_0|\eta|^{1/3}$.
		
		In this case,
		\begin{align}\label{eq:1/q}
			|q|^{-1}&\lesssim(|\eta||q|)^{-1/4}
			=|\eta|^{-1/3}\left(\frac{|q|}{|\eta|^{1/3}}\right)^{-1/4}\notag\\
			&= |\eta|^{-1/3}\left(\frac{|\eta|^{1/3}}{|q|}+1\right)^{\frac{1}{4}}\left(\frac{|q|}{|\eta|^{1/3}}+1\right)^{-\frac{1}{4}}\\
			&<\left(\frac{C_0+1}{C_0}\right)^\frac{1}{4}|\eta|^{-1/3} \rho\!\left(\frac{q}{|\eta|^{1/3}}\right)\notag.
		\end{align}
		Let $h(y)=1-\cos y$.
		Then $\phi'(y)=q-\varepsilon|\eta| h(y)$.
		On $[0,2\delta]$, $h$ is strictly increasing and there exist constants $c,C>0$ such that
		\begin{equation}\label{eq:h bound}
			cy^2\leq h(y)\leq Cy^2,\quad c|y|\leq|\sin y|\leq C|y|.
		\end{equation}
		If $\varepsilon q<0$, then $q$ and $-\varepsilon|\eta| h(y)$ have the same sign, and hence $|\phi'(y)|\geq|q|$. Splitting at $y=0$ makes $\phi'$ monotone on each half interval, and using Lemma~\ref{lem:van der Corput} (i), we have 
		\begin{equation*}
			\left|\int_{-\delta}^{\delta} e^{i\phi(y)}\psi(y)\,d y\right|\lesssim |q|^{-1}.
		\end{equation*}
		Thus \eqref{eq:B key2} follows from \eqref{eq:1/q}.
		
		It remains to consider the case $\varepsilon q>0$.
		
		{\flushleft \bf Subcase B.1.} $|q|/|\eta|<h(2\delta)$.
		
		Let $y_0\in(0,2\delta)$ be the unique solution of $h(y_0)=|q|/|\eta|$.
		By~\eqref{eq:h bound}, we have
		\begin{equation}\label{eq:y0}
			y_0\asymp\left(\frac{|q|}{|\eta|}\right)^{1/2}.
		\end{equation}
		We claim that there exist constants $0<c_-<1<c_+$, which only depend  on $\delta$, so that
		\begin{equation}\label{eq:detla h}
			|h(y)-h(y_0)|\geq\frac{1}{2} h(y_0)
		\end{equation}
		whenever $|y|\leq2\delta$ and $|y|\notin[c_-y_0,c_+y_0]$.
		Choose $c_-$ so small that $Cc_-^2\leq c/2$.
		If $|y|\leq c_-y_0$, then
		$h(y)\leq Cy^2\leq Cc_-^2y_0^2\leq \frac{c}{2}y_0^2\leq\frac{1}{2}h(y_0)$,
		and hence $|h(y)-h(y_0)|\geq\frac{1}{2}h(y_0)$.
		Next, choose $c_+$ so large that $cc_+^2\geq2C$.  
		If $|y|\geq c_+y_0$, then
		$h(y)\geq cy^2\geq cc_+^2y_0^2
		\geq2Cy_0^2\geq2h(y_0)$, which gives 
		$|h(y)-h(y_0)|\geq h(y_0)$.
		Thus \eqref{eq:detla h} holds.
		
		Let $I_1=\{-\delta\leq y\leq\delta: |y|\notin[c_-y_0,c_+y_0]\}$. 
		Assume first $y\in I_1$. Since $\varepsilon q>0$, we have $q=\varepsilon|q|$. 
		As $h(y_0)=|q|/|\eta|$, 
		$$\phi'(y)=q-\varepsilon|\eta| h(y)
		=\varepsilon|\eta|\bigl(h(y_0)-h(y)\bigr).$$
		Hence, by \eqref{eq:detla h},
		$$
		|\phi'(y)|
		=|\eta||h(y)-h(y_0)|
		\geq\frac{1}{2}|\eta| h(y_0)
		=\frac{1}{2}|q|.
		$$
		After splitting at $0$ and the endpoints $\pm c_-y_0,\pm c_+y_0$, each remaining interval lies entirely in $y>0$ or in $y<0$.  
		Since $\phi''(y)=-\varepsilon|\eta|\sin y$ has a fixed sign on each such half interval, $\phi'$ is monotone on each interval.  
		By Lemma~\ref{lem:van der Corput} (i), 
		\begin{equation}\label{eq:I_1}
			\left|\int_{I_1} e^{i\phi(y)}\psi(y)\,d y\right|\lesssim |q|^{-1}.
		\end{equation}

		Now assume $y\in I_2=[-\delta,\delta]\setminus I_1$. 
		Then $|y|\geq c_-y_0$.  
		Since $|y|\leq2\delta$ and $\delta$ is small, \eqref{eq:h bound} gives $|\sin y|\geq c|y|$.  
		Thus,
		$$
		|\phi''(y)|=|\eta||\sin y|\geq c|\eta||y| \geq c c_-|\eta| y_0\gtrsim|\eta| y_0.
		$$
		Using~\eqref{eq:y0}, we have 
		$|\eta| y_0\asymp |\eta|\left(\frac{|q|}{|\eta|}\right)^{1/2}=(|\eta||q|)^{1/2}$.
		Therefore, $|\phi''(y)|\gtrsim(|\eta||q|)^{1/2}$.
		By Lemma~\ref{lem:van der Corput} (ii), we obtain
		\begin{equation}\label{eq:I_2}
			\left|\int_{I_2}e^{i\phi(y)}\psi(y)\,d y\right|\lesssim (|\eta||q|)^{-1/4}.
		\end{equation}
		Sum~\eqref{eq:I_1} and~\eqref{eq:I_2}, and then~\eqref{eq:B key2} follows from~\eqref{eq:1/q}.

		{\flushleft \bf Subcase B.2.} $|q|/|\eta|\geq h(2\delta)$.
		
		Recall that $\psi$ is supported in $|y|\leq\delta$.
		We should give a lower bound for $|\phi'(y)|$ when $|y|\leq\delta$.
		Since $\varepsilon q>0$ and $\phi'(y)=q-\varepsilon|\eta| h(y)$, we have
		\begin{equation*}
			|\phi'(y)|=|q|-|\eta|h(y)=|q|\left((1-\frac{|\eta|}{|q|}h(y)\right).
		\end{equation*}
		Combined with $|q|/|\eta|\geq h(2\delta)$ and $h(y)\leq h(\delta)$ for $|y|\leq\delta$, it gives 
		\begin{equation*}
			|\phi'(y)|\geq |q|\left(1-\frac{h(\delta)}{h(2\delta)}\right).
		\end{equation*}
		Note that $h(\delta)<h(2\delta)$ and $\phi''(y)=-\varepsilon|\eta|\sin y$ has a fixed sign on each half interval after splitting at $y=0$. 
		Thus $|\phi'|\gtrsim |q|$ and is monotone on each piece. 
		Applying Lemma~\ref{lem:van der Corput} (i), we have
		\begin{equation*}
			\left|\int_{-\pi}^{\pi} e^{i\phi(y)}\psi(y)\,d y\right|\lesssim |q|^{-1}.
		\end{equation*}
		Once again, \eqref{eq:B key2} follows from \eqref{eq:1/q}.
		
		Finally, at $-\pi/2$ the corresponding parameter is $m-\eta$, and the proof is similar. 
		Combining it with~\eqref{eq:B key1} and~\eqref{eq:B key2}, we complete the proof.
	\end{proof}

	Then we work on the function
	$\rho(u)=(1+|u|)^{-1/4}$.
	
	\begin{lemma}\label{lem:prod rho}
		Let $J\subset\R$ be an interval of length at most $L$, with $L\geq1$.  For $k\geq1$, arbitrary $a_1,\dots,a_k\in\R$, and arbitrary signs $\sigma_i\in\{\pm1\}$,
		\begin{equation*}\label{eq:prod rho}
			\int_J\prod_{i=1}^k\rho(\sigma_iu+a_i)\,d u\lesssim_k
			\begin{cases}
				L^{1-k/4},&1\leq k<4,\\
				\log(2+L),&k=4,\\
				1,&k>4.
			\end{cases}
		\end{equation*}
	\end{lemma}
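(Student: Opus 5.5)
Since $\rho$ is even, $\rho(\sigma_iu+a_i)=\rho(u+\sigma_ia_i)$. Replacing $a_i$ by $\sigma_ia_i$, we may therefore assume all $\sigma_i=1$. The integrand is then $\prod_{i=1}^k(1+|u+a_i|)^{-1/4}$, a product of $k$ factors each decaying like $|u-b_i|^{-1/4}$ around its own center $b_i=-a_i$. The plan is to reduce everything to the worst configuration, in which all the centers coincide.

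\textbf{Step 1: reduction by H\"older.} By H\"older's inequality with $k$ equal exponents,
\[
\int_J\prod_{i=1}^k\rho(u+a_i)\,du\le\prod_{i=1}^k\Bigl(\int_J\rho(u+a_i)^k\,du\Bigr)^{1/k}.
\]
So it suffices to bound, uniformly in $a\in\R$, the single integral
\[
I_k(a)=\int_J(1+|u+a|)^{-k/4}\,du .
\]

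\textbf{Step 2: the one-variable integral.} Translate by $a$, so that $I_k(a)$ becomes an integral over an interval $J'$ of length at most $L$. The integrand is even and decreasing in $|v|$, so the integral over $J'$ is at most the integral over the centered interval $[-L/2,L/2]$. That is,
\[
I_k(a)\le 2\int_0^{L/2}(1+v)^{-k/4}\,dv .
\]
Computing this directly gives three regimes:
\begin{itemize}
\item for $k<4$, the bound is $\lesssim_k (1+L)^{1-k/4}\lesssim L^{1-k/4}$, using $L\ge1$;
\item for $k=4$, the bound is $2\log(1+L/2)\le2\log(2+L)$;
\item for $k>4$, the bound is at most $2\int_0^\infty(1+v)^{-k/4}\,dv=\frac{8}{k-4}$.
\end{itemize}

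\textbf{Step 3: conclusion.} Take the $k$-th root of each of the $k$ uniform bounds from Step 2 and multiply them. This returns exactly the same bound, which gives the claim.

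I do not expect a genuine obstacle here. The only points to check are that H\"older with equal exponents incurs no loss in the exponent of $L$, and the monotone-rearrangement remark in Step 2. The sign reduction at the start uses only the evenness of $\rho$.
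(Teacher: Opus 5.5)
Your proposal is correct and follows the paper's proof essentially verbatim: H\"older with $k$ equal exponents, then bounding each $\int_J(1+|v|)^{-k/4}\,dv$ by the integral over the centered interval $[-L/2,L/2]$ and reading off the three regimes. The only cosmetic difference is that you remove the signs $\sigma_i$ using the evenness of $\rho$, whereas the paper absorbs them through the change of variables $v=\sigma u+a$.
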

	
	\begin{proof}
		By H\"older's inequality, we have
		\begin{equation}\label{eq:Holder}
			\int_J\prod_{i=1}^k\rho(\sigma_iu+a_i)\,d u
			\leq\prod_{i=1}^k\left(\int_J\rho(\sigma_iu+a_i)^k\,d u\right)^{1/k}.
		\end{equation}
		Fix $a\in\R$ and $\sigma\in\{\pm1\}$.  Under the change of variables $v=\sigma u+a$,
		the image of $J$ is another interval $J'$ with exactly the same length as $J$.  
		Therefore
		$$
		\int_J\rho(\sigma u+a)^k\,d u
		=\int_{J'}\rho(v)^k\,d v.
		$$
		It remains to bound the largest possible integral of $\rho(v)^k=(1+|v|)^{-k/4}$
		over an interval of length at most $L$.  
		This function is largest near $0$ and decreases as $|v|$ increases.  Hence
		$$
		\sup_{|J|\leq L}\int_J(1+|v|)^{-k/4}\,d v
		\leq\int_{-\frac{L}{2}}^{\frac{L}{2}}(1+|v|)^{-k/4}\,d v.
		$$
		The last integral is $O_k(L^{1-k/4})$ for $k<4$, $O(\log(2+L))$ for $k=4$, and $O_k(1)$ for $k>4$.  
		Substituting into~\eqref{eq:Holder} yields the desired bound.
	\end{proof}
	
	Now we give the proof of the upper bound.
	\begin{proof}[Proof of the upper bound in Theorem~\ref{thm:main}]
		Let $T_0=\max\{1,c_1^{-1}\}$ and $E_t=\{\eta\in\R:c_1|t|\leq|\eta|\leq c_2|t|\}$,
		where $c_2>c_1>0$ are the constants provided by Lemma~\ref{lem:hat F}.
		
		If $|t|\leq T_0$, then we use the elementary inequality $\left|\frac{\sin(t\omega)}{\omega}\right|\leq|t|$,
		which gives $|G_d(x,t)|\leq |t|\leq T_0$.
		Since $(1+|t|)^{-\sigma_d}\geq (1+T_0)^{-\sigma_d}$ for $t\in [-T_0,T_0]$, we obtain~\eqref{eq:uppermain} by choosing a constant $C_d$ sufficiently large; for instance, one may take $C_d=T_0(1+T_0)^{\sigma_d}$.
		
		By symmetry, it remains to prove~\eqref{eq:uppermain} for $t\geq T_0$. Moreover, since 
		$t\leq1+|t|\leq (1+\frac{1}{T_0})t$ for $t\geq T_0$, we can replace \eqref{eq:uppermain} by 
		\begin{equation}\label{eq:uppermain1}
			|G_d(x,t)|\le C_dt^{-\sigma_d},
			\quad x\in\Z^d.
		\end{equation}   
		If $|x|\geq c_dt$, then Lemma~\ref{lem:schultz} (i) gives
		$|G_d(x,t)|\lesssim_d t^{-d/2}$,
		which is stronger than~\eqref{eq:uppermain1} because $\frac{d}{2}-\sigma_d=\frac{d-1}{6}>0$. Assume that $|x|\leq c_dt$.  Lemma \ref{lem:schultz} (ii) gives $O_d(t^{-(d-1)})$ for $|K_d^{\mathrm{low}}|$.  
		It suffices to prove  $|K_d^{\mathrm{hi}}(x,t)|\lesssim_d t^{-\sigma_d}$ for $x\in\Z^d$
		for $d\geq 5$.
		
		By Lemma~\ref{lem:factorization} and Lemma~\ref{lem:hat F}, up to an $O_N(t^{-N})$ error for arbitrary $N$, we have
		\begin{equation}\label{eq:K_d^hi bound}
			|K_d^{\mathrm{hi}}(x,t)|
			\lesssim_d t^{-1/2}\int_{E_t}\prod_{i=1}^d|B_{x_i}(\eta)|\,d\eta.
		\end{equation}
		Since $|\eta|\geq c_1t\geq1$ for every $\eta\in E_t$, we can apply Lemma~\ref{lem:B bound} to every $|B_{x_i}(\eta)|$ and multiply them, yielding
		\begin{equation}\label{eq:prod B_x_i}
			\prod_{i=1}^d|B_{x_i}(\eta)|\lesssim \prod_{i=1}^d\left[|\eta|^{-1/2}+|\eta|^{-1/3}\rho\!
			\left(\frac{x_i+\eta}{|\eta|^{1/3}}\right)
			+|\eta|^{-1/3}\rho\!\left(\frac{x_i-\eta}{|\eta|^{1/3}}\right)\right].
		\end{equation} 
		For each coordinate $x_i$, we can choose either its regular term $|\eta|^{-1/2}$ or one of its two $\rho$-terms $|\eta|^{-1/3}\rho\!\left(\frac{x_i+\sigma\eta}{|\eta|^{1/3}}\right)$, where $\sigma\in \{\pm1\}$.
		Choose a term of \eqref{eq:prod B_x_i} that contains exactly $k$ $\rho$-factors and $d-k$ regular factors.  Since $|\eta|\asymp t$ on $E_t$, there is a constant $C$ such that
		\begin{equation*}
			\rho\!\left(\frac{x+\sigma\eta}{|\eta|^{1/3}}\right)
			\leq C\rho\!\left(\frac{x+\sigma\eta}{t^{1/3}}\right),
			\qquad \eta\in E_t.
		\end{equation*}
		Hence the chosen term together with the coefficient $t^{-1/2}$ in \eqref{eq:K_d^hi bound} is bounded by
		\begin{equation}\label{eq:k term1}
			Ct^{-1/2}t^{-(d-k)/2}t^{-k/3}
			\int_{E_t}\prod_{\ell=1}^k
			\rho\!\left(\frac{x_{i_\ell}+\sigma_\ell\eta}{t^{1/3}}\right)\,d\eta.
		\end{equation}
		If $k=0$, all factors are regular. 
		Then integral in \eqref{eq:k term1} is simply $\int_{E_t}d\eta\lesssim t$, and hence \eqref{eq:k term1} is $O(t^{-(d-1)/2})$.
		
		Suppose $k\geq1$. 
		The set $E_t$ is contained in an interval of length $L_E\lesssim t$. 
		Under the change of variables $\eta=t^{1/3}u$, \eqref{eq:k term1} becomes
		\begin{equation}\label{eq:k term2}
			Ct^{-d/2+k/6-1/6}
			\int_{D_t}\prod_{\ell=1}^k\rho(\sigma_\ell u+a_\ell)\,d u,
			\quad a_\ell=\frac{x_{i_\ell}}{t^{1/3}},
		\end{equation}
		where $D_t$ is contained in an interval of length $L_D\lesssim t^{2/3}$.
		If $1\leq k<4$, Lemma~\ref{lem:prod rho} gives an additional factor $t^{2/3-k/6}$, so \eqref{eq:k term2} is $O(t^{-(d-1)/2})$.  
		If $k=4$, it is $O\bigl(t^{-(d-1)/2}\log(2+t)\bigr)$.
		If $k\geq5$, the integral in \eqref{eq:k term2} is uniformly bounded and then \eqref{eq:k term2} is $O_d\bigl(t^{-(3d-k+1)/6}\bigr)$.
		Since $k\leq d$, $\frac{3d-k+1}{6}\geq\frac{2d+1}{6}=\sigma_d$.
		Finally, for $d\geq5$, $\frac{d-1}{2}-\sigma_d=\frac{d-4}{6}>0$,
		so
		\begin{equation}\label{eq:log of d=4}
			t^{-(d-1)/2}\log(2+t)\lesssim_d t^{-\sigma_d}.
		\end{equation}
		Summing these bounds for the finitely many terms in \eqref{eq:K_d^hi bound} yields $|K_d^{\mathrm{hi}}(x,t)|\lesssim_d t^{-\sigma_d}$ for $x\in\Z^d$ and $d\geq 5$.
		This completes the proof.
	\end{proof}
	
	Taking $d=4$ in the above proof, we can obtain the following corollary immediately (We only need to note that the inequality in \eqref{eq:log of d=4} reverses direction). 
	\begin{corollary}\label{corollary}
		$|G_4(x,t)|\lesssim (1+|t|)^{-3/2}\log(2+|t|)$ for $(x,t)\in \Z^4\times \R$.
	\end{corollary}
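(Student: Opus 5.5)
The corollary follows by rerunning the proof of the upper bound in Theorem~\ref{thm:main} with $d=4$ and tracking which terms dominate. For $|t|\leq T_0$ we again use $|\sin(t\omega)/\omega|\leq|t|$, which gives a bounded contribution that is absorbed into $(1+|t|)^{-3/2}\log(2+|t|)$ by enlarging the constant. For $t\geq T_0$ it suffices to prove $|G_4(x,t)|\lesssim t^{-3/2}\log(2+t)$. In the region $|x|\geq c_4t$, Lemma~\ref{lem:schultz}~(i) (valid for $d\geq4$) gives $O(t^{-2})$, which is better. In the region $|x|\leq c_4t$, Lemma~\ref{lem:schultz}~(ii) gives $|K_4^{\mathrm{low}}|\lesssim t^{-3}$. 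It therefore remains to bound $K_4^{\mathrm{hi}}$.

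For $K_4^{\mathrm{hi}}$ we use Lemma~\ref{lem:factorization} and Lemma~\ref{lem:hat F}, neither of which used $d\geq5$, to reach \eqref{eq:K_d^hi bound}. We then expand the product \eqref{eq:prod B_x_i} using Lemma~\ref{lem:B bound} and treat each term according to the number $k$ of $\rho$-factors it contains.
\begin{itemize}
\item For $k=0$, the term is $O(t^{-(d-1)/2})=O(t^{-3/2})$.
\item For $1\leq k<4$, the rescaling $\eta=t^{1/3}u$ together with Lemma~\ref{lem:prod rho} on an interval of length $\lesssim t^{2/3}$ again gives $O(t^{-3/2})$.
\item For $k=4=d$, the prefactor $t^{-d/2+k/6-1/6}=t^{-3/2}$ multiplies an integral bounded by $\log(2+t^{2/3})\lesssim\log(2+t)$, by the $k=4$ case of Lemma~\ref{lem:prod rho}. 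This gives $O(t^{-3/2}\log(2+t))$.
\item The case $k\geq5$ cannot occur, since $k\leq d=4$.
\end{itemize}
Summing these finitely many terms gives $|K_4^{\mathrm{hi}}|\lesssim t^{-3/2}\log(2+t)$. Since $G_4=-\operatorname{Im}K_4$, this proves the claimed bound.

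The only step needing care is the comparison at the end. For $d\geq5$ the $\log$ term was absorbed via \eqref{eq:log of d=4}. For $d=4$ the inequality reverses, so $t^{-3/2}\log(2+t)$ is the dominant term and every other contribution is dominated by it. The hardest step is simply confirming that no lemma used along the way secretly required $d\geq5$; the uniform bound of Lemma~\ref{lem:B bound} is dimension-free and Lemma~\ref{lem:schultz} holds for $d\geq4$, so none does.
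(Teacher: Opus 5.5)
Your proposal is correct and is essentially the paper's own argument. The paper also obtains the corollary by rerunning the upper-bound proof with $d=4$: the $k=4$ term, via the $k=4$ case of Lemma~\ref{lem:prod rho}, produces the dominant $t^{-3/2}\log(2+t)$ contribution, so the comparison in \eqref{eq:log of d=4} reverses direction instead of absorbing the logarithm, and your check that none of the lemmas used requires $d\geq5$ is exactly what the paper's one-line justification relies on.
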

	
	\begin{remark}\label{remark}
		The logarithm for $d=4$ comes from the integral of the function $\rho^k(u)=(1+|u|)^{-k/4}$, and when $d\geq 5$, it is dominated by lower order term; see \eqref{eq:log of d=4}.
	\end{remark}

	\section{Sharpness of the estimates}\label{sec:6}
	Set $\xi^*=(\pi/2,\ldots,\pi/2)$, $w_*=\omega(\xi^*)=\sqrt{2d}$, and $v^*=w_*^{-1}(1,\dots,1)$.
	Let $e$ be the unit vector $d^{-1/2}(1,\dots,1)^\top$. 
	The orthogonal projection matrix onto the line spanned by $e$ and onto its orthogonal complement $e^\perp=\{z\in\R^d:\sum_{j=1}^dz_j=0\}$ are respectively
	$P_\parallel=ee^\top$ and $P_\perp=I-ee^\top$.
	For $t\geq1$, define the operator
	$A_t\coloneqq t^{1/3}P_\perp+t^{1/2}P_\parallel$.
	Then $A_t$ is positive definite and self-adjoint, with eigenvalues $t^{1/3}$ on $e^\perp$ and $t^{\frac{1}{2}}$ on $\operatorname{span}\{e\}$.
	Its determinant is
	\begin{equation}\label{eq:detAt}
		\det A_t=t^{(d-1)/3+1/2}=t^{\sigma_d}.
	\end{equation}
	
	Every $\zeta\in\R^d$ has a unique decomposition $\zeta=z+re$ with $z\in e^\perp$ and $r=e^\top\zeta$. Define the polynomial
	\begin{equation*}
		\Phi_\infty(\zeta)=\frac{d}{2w_*^3}r^2+\frac{1}{6w_*}\sum_{j=1}^d z_j^3.
	\end{equation*}
	We need two small neighborhood conditions.  First, since $e^\top\bigl(v^*+\nabla\omega(\xi^*+\theta)\bigr)=\sqrt{2}$ at $\theta=0$,
	there is a radius $\delta_1>0$ such that
	\begin{equation}\label{eq:detla1}
		e^\top\bigl(v^*+\nabla\omega(\xi^*+\theta)\bigr)\geq 1 \quad \text{for } |\theta|\leq\delta_1.
	\end{equation}  
	Second, since $\Phi_\infty(0)=0$, there is a radius $\delta_2>0$ such that
	\begin{equation}\label{eq:detla2}
		|\Phi_\infty(\zeta)|\leq\pi/4 \quad \text{for } |\zeta|\leq\delta_2.
	\end{equation}
	Fix a small radius $0<\delta<\min\{\delta_1,\delta_2,1\}$.
	Now we can choose a nonnegative bump function $\widehat W\in C_c^\infty(\R^d)\subset\Sch(\R^d)$ such that $\widehat W\not\equiv0$, $\widehat W(-\zeta)=\widehat W(\zeta)$, and $\supp\widehat W\subset B(0,\delta)$.
	Let $W$ be its inverse Fourier transform.
	Since the Fourier transform is a bijective map of $\Sch(\R^d)$, $W$ is also in $\Sch(\R^d)$. Define the lattice average
	\begin{equation}\label{eq:St}
		S_t=\sum_{x\in\Z^d}W\bigl(A_t^{-1}(x-tv^*)\bigr)e^{-ix\cdot\xi^*}G_d(x,t).
	\end{equation}
	First, we give two lemmas about the lattice average $S_t$.
	\begin{lemma}\label{lem:total W}
		There exists a constant $C_{\mathrm{sum}}>0$ such that, for all $t\geq1$ and $y\in\R^d$,
		\begin{equation*}
			\sum_{x\in\Z^d}|W(A_t^{-1}(x-y))|\leq C_{\mathrm{sum}}\det A_t.
		\end{equation*}
		Moreover,
		\begin{equation}\label{eq:St bound}
			|S_t|\leq C_{\mathrm{sum}}t^{\sigma_d}\sup_{x\in\Z^d}|G_d(x,t)|.
		\end{equation}
	\end{lemma}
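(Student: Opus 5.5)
The plan is to compare the lattice sum with the corresponding integral, using the Schwartz decay of $W$ together with the fact that $A_t^{-1}$ only contracts. Since $t\geq1$, the eigenvalues of $A_t$ are $t^{1/3}\geq1$ and $t^{1/2}\geq1$. Hence $|A_t^{-1}z|\leq|z|$ for all $z$, and every unit cube is mapped by $A_t^{-1}$ into a set of diameter at most $\sqrt d$.

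\textbf{Step 1: a radial majorant.} By \eqref{eq:rapidly decreasing}, $|W(z)|\leq C_N(1+|z|)^{-N}$ with $N=d+1$. Set $\Psi(z)=(1+|z|)^{-N}$. For $x\in\Z^d$ and $u$ in the unit cube $Q_x=x+[-\tfrac12,\tfrac12)^d$, we have
\[
|A_t^{-1}(x-y)-A_t^{-1}(u-y)|=|A_t^{-1}(x-u)|\leq|x-u|\leq\sqrt d/2.
\]
This gives $1+|A_t^{-1}(u-y)|\leq(1+\sqrt d/2)\,(1+|A_t^{-1}(x-y)|)$, and therefore
\[
\Psi\bigl(A_t^{-1}(x-y)\bigr)\leq(1+\sqrt d/2)^N\,\Psi\bigl(A_t^{-1}(u-y)\bigr)\quad\text{for all }u\in Q_x.
\]

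\textbf{Step 2: sum to integral.} Average the inequality over $u\in Q_x$ and sum over $x$. Since the cubes $Q_x$ tile $\R^d$,
\[
\sum_{x\in\Z^d}|W(A_t^{-1}(x-y))|\leq C_N(1+\sqrt d/2)^N\int_{\R^d}\Psi\bigl(A_t^{-1}(u-y)\bigr)\,du .
\]
The substitution $u=y+A_tz$ turns the integral into $\det A_t\int_{\R^d}\Psi(z)\,dz$, and this is finite because $N>d$. Taking $C_{\mathrm{sum}}=C_N(1+\sqrt d/2)^N\|\Psi\|_{L^1}$ proves the first claim, with a constant independent of $t$ and $y$.

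\textbf{Step 3: the bound on $S_t$.} In \eqref{eq:St}, use $|e^{-ix\cdot\xi^*}|=1$ and bound $|G_d(x,t)|$ by $\sup_{x}|G_d(x,t)|$. Then apply Step 2 with $y=tv^*$ and use \eqref{eq:detAt}, i.e. $\det A_t=t^{\sigma_d}$; this gives \eqref{eq:St bound}. Absolute convergence of the series defining $S_t$ is part of the same estimate.

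There is no real obstacle here. The only point needing care is Step 1: the comparison between the value at a lattice point and the values on its cube must be uniform in $t$. This uniformity holds exactly because $A_t^{-1}$ has operator norm at most $1$ when $t\geq1$.
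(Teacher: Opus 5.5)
Your proposal is correct and follows essentially the same route as the paper: Schwartz decay $|W(z)|\lesssim(1+|z|)^{-N}$ with $N>d$, a comparison on unit cubes that is uniform in $t$ because $\|A_t^{-1}\|\le 1$, summation over the tiling, and the substitution $u=y+A_tz$, which produces the factor $\det A_t=t^{\sigma_d}$. The bound on $S_t$ then follows by pulling out $\sup_x|G_d(x,t)|$, exactly as in the paper.
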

	
	\begin{proof}
		Note that all the eigenvalues of $A_t$ are at least one.  
		Fix $d$ and $N>d$.  
		Since $W\in\Sch(\R^d)$, by the definition, it gives $|W(z)|\lesssim (1+|z|)^{-N}$.
		For $x\in\Z^d$, let $Q_x=x+[-1/2,1/2]^d$.  If $u\in Q_x$, then
		$$
		|A_t^{-1}(u-y)|\leq|A_t^{-1}(x-y)|+\frac{\sqrt{d}}{2}\|A_t^{-1}\|
		\leq|A_t^{-1}(x-y)|+\frac{\sqrt{d}}{2}.
		$$
		Therefore, $|W(A_t^{-1}(x-y))|\lesssim(1+|A_t^{-1}(x-y)|)^{-N}\lesssim(1+|A_t^{-1}(u-y)|)^{-N}$.
		Since every cube $Q_x$ has volume $1$, it  follows that
		$$
		|W(A_t^{-1}(x-y))|=\int_{Q_x}|W(A_t^{-1}(x-y))|du
		\lesssim\int_{Q_x}(1+|A_t^{-1}(u-y)|)^{-N}du.
		$$
		Summing in $x$, we have
		$$
		\sum_{x\in\Z^d}|W(A_t^{-1}(x-y))|\lesssim\int_{\R^d}(1+|A_t^{-1}(u-y)|)^{-N}du.
		$$
		Under the change of variables $z=A_t^{-1}(u-y)$ and $N>d$, we obtain
		$$
		\sum_{x\in\Z^d}|W(A_t^{-1}(x-y))|\lesssim\det A_t\int_{\R^d}(1+|z|)^{-N}\,d z \lesssim \det A_t.
		$$
		The bound \eqref{eq:St bound} follows directly from \eqref{eq:detAt} and \eqref{eq:St}.
	\end{proof}
	
	\begin{lemma}\label{lem:poisson}
		For all sufficiently large $t$,
		\begin{equation}\label{eq:St integral}
			S_t=\frac{1}{(2\pi)^d}\int_{\R^d}
			\widehat W(-\zeta)\exp\bigl(itv^*\cdot A_t^{-1}\zeta\bigr)
			\frac{\sin\bigl(t\omega(\xi^*+A_t^{-1}\zeta)\bigr)}
			{\omega(\xi^*+A_t^{-1}\zeta)}\,d\zeta.
		\end{equation}
	\end{lemma}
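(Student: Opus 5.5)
The plan is to substitute the Fourier representation of $G_d$ into the definition \eqref{eq:St} of $S_t$, interchange sum and integral, and evaluate the lattice sum by the Poisson summation formula (Lemma~\ref{lem:Poisson formula}). Writing
\[
G_d(x,t)=\frac{1}{(2\pi)^d}\int_{\T^d}e^{ix\cdot\xi}\frac{\sin(t\omega(\xi))}{\omega(\xi)}\,d\xi ,
\]
the integrand is bounded by $|t|$. By Lemma~\ref{lem:total W} the weights $W(A_t^{-1}(x-tv^*))$ are absolutely summable, so Fubini gives
\[
S_t=\frac{1}{(2\pi)^d}\int_{\T^d}\Bigl(\sum_{x\in\Z^d}W\bigl(A_t^{-1}(x-tv^*)\bigr)e^{ix\cdot(\xi-\xi^*)}\Bigr)\frac{\sin(t\omega(\xi))}{\omega(\xi)}\,d\xi .
\]

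Next we compute the inner sum. Set $f(y)=W(A_t^{-1}(y-tv^*))e^{iy\cdot(\xi-\xi^*)}$, which lies in $\Sch(\R^d)$. Using the inversion formula $W(z)=(2\pi)^{-d}\int\widehat W(\zeta)e^{i\zeta\cdot z}d\zeta$ and a linear change of variables with $A_t$ self-adjoint, one finds
\[
\widehat f(\eta)=\det A_t\, e^{-itv^*\cdot(\eta-\xi+\xi^*)}\,\widehat W\bigl(A_t(\eta-\xi+\xi^*)\bigr).
\]
Poisson summation then gives
\[
\sum_x f(x)=\det A_t\sum_{k\in\Z^d}e^{-itv^*\cdot(2\pi k-\xi+\xi^*)}\widehat W\bigl(A_t(2\pi k-\xi+\xi^*)\bigr).
\]
Since $\supp\widehat W\subset B(0,\delta)$ and the eigenvalues of $A_t$ are at least $t^{1/3}$, each term is supported where $|\xi-\xi^*-2\pi k|\le\delta t^{-1/3}$. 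For $t$ large this is a tiny ball, so on the fundamental domain $\T^d$, regarded as a periodic integrand, only one periodic copy contributes. Equivalently, the periodized function integrated over $\T^d$ equals the single-term function integrated over $\R^d$. The integrand $\sin(t\omega)/\omega$ is $2\pi$-periodic in each coordinate, which is what makes this unfolding legitimate.

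Finally, we substitute $\xi=\xi^*-A_t^{-1}\zeta$, with $d\xi=(\det A_t)^{-1}d\zeta$. The determinant cancels, the phase becomes $e^{-itv^*\cdot(-A_t^{-1}\zeta)}=e^{itv^*\cdot A_t^{-1}\zeta}$, and the amplitude becomes $\widehat W(\zeta)$ evaluated at $\xi^*-A_t^{-1}\zeta$. Replacing $\zeta$ by $-\zeta$, or using the symmetry $\widehat W(-\zeta)=\widehat W(\zeta)$, puts this into exactly the form \eqref{eq:St integral}, with $\widehat W(-\zeta)$ and $\omega(\xi^*+A_t^{-1}\zeta)$.

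The main point requiring care is bookkeeping rather than analysis. The signs in the exponent must track the convention $\widehat f(\eta)=\int e^{-i\eta\cdot x}f$. The restriction to large $t$ is needed so that $\delta t^{-1/3}<\pi$; this guarantees that only one $k$ survives and that the support stays away from the singularity of $1/\omega$ at $\xi=0$, since $\xi^*$ is at distance $\sqrt d\,\pi/2$ from the origin.
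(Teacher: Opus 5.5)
Your proof follows the paper's route. You justify Fubini by $|\sin(t\omega)/\omega|\le|t|$ and Lemma~\ref{lem:total W}, apply Poisson summation to $u\mapsto W\bigl(A_t^{-1}(u-tv^*)\bigr)e^{iu\cdot(\xi-\xi^*)}$ with the same Fourier transform computation, discard the $k\neq0$ copies using $\supp\widehat W\subset B(0,\delta)$, and finish with a linear change of variables. The one difference is cosmetic. You unfold the periodized sum against the $2\pi$-periodic factor $\sin(t\omega)/\omega$ into an integral over $\R^d$. The paper instead works on the fundamental domain $\xi^*+[-\pi,\pi]^d$ and checks that only $k=0$ survives there. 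Both are valid, and since $\delta<1$, neither needs more than $t\ge1$.

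There is one slip in the last step. The $k=0$ term is $\det A_t\,e^{itv^*\cdot(\xi-\xi^*)}\,\widehat W\bigl(A_t(\xi^*-\xi)\bigr)$. So the substitution $\xi=\xi^*-A_t^{-1}\zeta$ gives the phase $e^{-itv^*\cdot A_t^{-1}\zeta}$, not $e^{itv^*\cdot A_t^{-1}\zeta}$; you inserted $\xi-\xi^*$ where $\xi^*-\xi$ appears. The correct intermediate integrand is $e^{-itv^*\cdot A_t^{-1}\zeta}\,\widehat W(\zeta)\,\sin\bigl(t\omega(\xi^*-A_t^{-1}\zeta)\bigr)/\omega(\xi^*-A_t^{-1}\zeta)$, and the reflection $\zeta\mapsto-\zeta$ turns it into exactly \eqref{eq:St integral}.

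The alternative you offer, using only $\widehat W(-\zeta)=\widehat W(\zeta)$, does not work. It cannot turn $\omega(\xi^*-A_t^{-1}\zeta)$ into $\omega(\xi^*+A_t^{-1}\zeta)$, and these differ because $\omega(\xi^*\pm\theta)^2=2d\pm2\sum_j\sin\theta_j$. Substituting $\zeta=A_t(\xi-\xi^*)$ directly, as the paper does, avoids this detour.
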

	
	\begin{proof}
		For $y,\theta\in\mathbb R^d$, define
		$f_{y,\theta}(u)=W\!\left(A_t^{-1}(u-y)\right)e^{iu\cdot\theta}$.
		Since $W\in\mathcal S(\mathbb R^d)$, we have $f_{y,\theta}\in\mathcal S(\mathbb R^d)$. Then Fourier transform gives
		\begin{equation*}
			\widehat f_{y,\theta}(\eta)=\int_{\mathbb R^d}
			W\!\left(A_t^{-1}(u-y)\right)e^{iu\cdot(\theta-\eta)}\,du.
		\end{equation*}
		Since $A_t$ is positive definite and self-adjoint, under the change of variables $z=A_t^{-1}(u-y)$,
		we have $du=(\det A_t)\,dz$ and
		$$
		iu\cdot(\theta-\eta)=iy\cdot(\theta-\eta)+i(A_tz)\cdot(\theta-\eta)=iy\cdot(\theta-\eta)-iz\cdot A_t(\eta-\theta),
		$$
		where $\cdot$ denotes the inner product of two vectors.
		Therefore,
		\begin{align*}
			\widehat f_{y,\theta}(\eta)
			&=(\det A_t)e^{iy\cdot(\theta-\eta)}\int_{\mathbb R^d}
			W(z)e^{-iz\cdot A_t(\eta-\theta)}\,dz \\
			&=(\det A_t)e^{iy\cdot(\theta-\eta)}
			\widehat W\!\left(A_t(\eta-\theta)\right).
		\end{align*}
		Hence  Lemma~\ref{lem:Poisson formula} gives
		\begin{equation}\label{eq:Poisson W}
			\sum_{x\in\Z^d}
			W\!\left(A_t^{-1}(x-y)\right)e^{ix\cdot\theta}
			=\det A_t\sum_{k\in\Z^d}e^{iy\cdot(\theta-2\pi k)}
			\widehat W\!\left(A_t(2\pi k-\theta)\right).
		\end{equation}
		
		We now transform $S_t$ into a form to which~\eqref{eq:Poisson W} can be applied. 
		Recall that
		$$
		S_t=\sum_{x\in\Z^d}W\!\left(A_t^{-1}(x-tv^*)\right)
		e^{-ix\cdot\xi^*}G_d(x,t),
		$$
		where
		$$
		G_d(x,t)=\frac{1}{(2\pi)^d}\int_{\T^d}e^{ix\cdot\xi}
		\frac{\sin(t\omega(\xi))}{\omega(\xi)}\,d\xi.
		$$
		Substituting the latter identity gives
		\begin{equation*}
			S_t=\frac{1}{(2\pi)^d}\sum_{x\in\Z^d}\int_{\T^d}
			W\!\left(A_t^{-1}(x-tv^*)\right)e^{ix\cdot(\xi-\xi^*)}
			\frac{\sin(t\omega(\xi))}{\omega(\xi)}\,d\xi.
		\end{equation*}
		Since $\left|\frac{\sin(t\omega(\xi))}{\omega(\xi)}\right|\leq |t|$ 
		and Lemma~\ref{lem:total W} gives
		$\sum\limits_{x\in\Z^d} \left|W\!\left(A_t^{-1}(x-tv^*)\right)\right|<+\infty$ for fixed $t$,
		we have 
		\begin{equation*}
			\sum_{x\in\Z^d}\int_{\T^d}\left|
			W\!\left(A_t^{-1}(x-tv^*)\right)e^{ix\cdot(\xi-\xi^*)}
			\frac{\sin(t\omega(\xi))}{\omega(\xi)}\right|\,d\xi<+\infty.
		\end{equation*}
		Hence, by Fubini's theorem, we may interchange the lattice sum 
		and the torus integral:
		\begin{equation}\label{eq:St Fubini}
			S_t=\frac{1}{(2\pi)^d}\int_{\T^d}\left[\sum_{x\in\Z^d}
			W\!\left(A_t^{-1}(x-tv^*)\right)e^{ix\cdot(\xi-\xi^*)}\right]
			\frac{\sin(t\omega(\xi))}{\omega(\xi)}\,d\xi.
		\end{equation}
		Notice that the function appearing in the integral over $\T^d$ is $2\pi$-periodic in each coordinate of $\xi$.  Then we can represent $\T^d$ by the domain $\xi^*+[-\pi,\pi]^d$. Set $\theta=\xi-\xi^*$, then $\theta\in[-\pi,\pi]^d$. Applying \eqref{eq:Poisson W} with $y=tv^*$, 
		we obtain
		\begin{equation*}
			\sum_{x\in\Z^d}W\!\left(A_t^{-1}(x-tv^*)\right)e^{ix\cdot(\xi-\xi^*)}
			=\det A_t\sum_{k\in\Z^d}e^{itv^*\cdot(\theta-2\pi k)}\widehat W\!\left(A_t(2\pi k-\theta)\right).
		\end{equation*}
		
		We claim that, for all sufficiently large $t$, every term with $k\neq0$ vanishes. 
		In fact, if $k\neq0$, then there exists some $j$ such that $k_j\neq0$. 
		Since $|\theta_j|\leq\pi$, $|2\pi k_j-\theta_j|\geq2\pi|k_j|-|\theta_j|\geq \pi$.
		Hence $|2\pi k-\theta|\geq\pi$.
		The smallest eigenvalue of $A_t$ is $t^{1/3}$, and it follows that 
		$|A_t(2\pi k-\theta)|\geq t^{1/3}|2\pi k-\theta|\geq \pi t^{1/3}$.
		Since $\operatorname{supp}\widehat W\subset B(0,\delta)$, we have $\widehat W\!\left(A_t(2\pi k-\theta)\right)=0$ for  $k\neq0$ and all sufficiently large $t$.
		Thus,
		$$
		\sum_{x\in\Z^d}W\!\left(A_t^{-1}(x-tv^*)\right)e^{ix\cdot(\xi-\xi^*)}
		=(\det A_t)e^{itv^*\cdot(\xi-\xi^*)}\widehat W\!\left(-A_t(\xi-\xi^*)\right).
		$$
		Substituting this identity into \eqref{eq:St Fubini}, we obtain
		\begin{equation*}
			S_t=\frac{\det A_t}{(2\pi)^d}\int_{\xi^*+[-\pi,\pi]^d}
			e^{itv^*\cdot(\xi-\xi^*)}\widehat W\!\left(-A_t(\xi-\xi^*)\right)
			\frac{\sin(t\omega(\xi))}{\omega(\xi)}\,d\xi.
		\end{equation*}
		Under the change of variables $\zeta=A_t(\xi-\xi^*)$, then 
		$\xi=\xi^*+A_t^{-1}\zeta$ and $d\xi=(\det A_t)^{-1}\,d\zeta$.
		Therefore, we get
		\begin{equation*}
			S_t=\frac{1}{(2\pi)^d}\int_{A_t[-\pi,\pi]^d}
			\widehat W(-\zeta)e^{itv^*\cdot A_t^{-1}\zeta}
			\frac{\sin\!\left(t\omega(\xi^*+A_t^{-1}\zeta)\right)}{\omega(\xi^*+A_t^{-1}\zeta)}\,d\zeta.
		\end{equation*}
		Since $\operatorname{supp}\widehat W\subset B(0,\delta)$ and the largest eigenvalue of $A_t^{-1}$ is $t^{-1/3}$,
		we have $|A_t^{-1}\zeta|\leq\delta t^{-1/3}$ for $\zeta\in\operatorname{supp}\widehat W$.
		Thus, for all sufficiently large $t$, $A_t^{-1}\zeta\in[-\pi,\pi]^d$, 
		and hence $\operatorname{supp}\widehat W\subset A_t[-\pi,\pi]^d$.
		We may extend the domain of integration to $\mathbb R^d$, obtaining
		\begin{equation*}
			S_t=\frac{1}{(2\pi)^d}\int_{\R^d}
			\widehat W(-\zeta)e^{itv^*\cdot A_t^{-1}\zeta}
			\frac{\sin\!\left(t\omega(\xi^*+A_t^{-1}\zeta)\right)}
			{\omega(\xi^*+A_t^{-1}\zeta)}\,d\zeta,
		\end{equation*}
		as desired.
	\end{proof}
	
	Next, we inspect the phase near $\xi^*$.  
	Define
	\begin{equation}\label{eq:phi*}
		\phi_*(\theta)=v^*\cdot\theta-\bigl(\omega(\xi^*+\theta)-w_*\bigr).
	\end{equation}
	Then $\phi_*(0)=0$ and $\nabla\phi_*(0)=0$.
	
	\begin{lemma}\label{lem:phi* Tayler}
		For $\theta$ sufficiently small,
		\begin{equation}\label{eq:phi* Tayler}
			\phi_*(\theta)
			=\frac{1}{2w_*^3}\left(\sum_{j=1}^d\theta_j\right)^2
			+\frac{1}{6w_*}\sum_{j=1}^d\theta_j^3
			-\frac{1}{2w_*^5}\left(\sum_{j=1}^d\theta_j\right)^3
			+O(|\theta|^4).
		\end{equation}
	\end{lemma}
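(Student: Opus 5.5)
The plan is a direct Taylor expansion of $\omega$ around $\xi^*$, using the explicit form of $\omega$ instead of computing a general Hessian and third-order tensor. The key observation is that $\cos(\pi/2+\theta_j)=-\sin\theta_j$. Hence
\[
\omega(\xi^*+\theta)^2=\sum_{j=1}^d\bigl(2+2\sin\theta_j\bigr)=2d+2s(\theta),\qquad s(\theta)\coloneqq\sum_{j=1}^d\sin\theta_j .
\]
Since $w_*^2=2d$, this gives $\omega(\xi^*+\theta)=w_*\sqrt{1+\varepsilon}$ with $\varepsilon=2s/w_*^2=s/d$. For $\theta$ small we have $|\varepsilon|\lesssim|\theta|$ small, so the binomial series $\sqrt{1+\varepsilon}=1+\frac{\varepsilon}{2}-\frac{\varepsilon^2}{8}+\frac{\varepsilon^3}{16}+O(\varepsilon^4)$ is valid.

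Multiplying by $w_*$ and substituting $\varepsilon=2s/w_*^2$ gives
\[
\omega(\xi^*+\theta)-w_*=\frac{s}{w_*}-\frac{s^2}{2w_*^3}+\frac{s^3}{2w_*^5}+O(|\theta|^4),
\]
where $O(\varepsilon^4)=O(|\theta|^4)$ because $|s|\lesssim|\theta|$. Next write $\sigma=\sum_j\theta_j$ and $\tau=\sum_j\theta_j^3$. Since $\sin\theta_j=\theta_j-\theta_j^3/6+O(\theta_j^5)$, we get $s=\sigma-\tau/6+O(|\theta|^5)$. It follows that $s^2=\sigma^2+O(|\theta|^4)$ and $s^3=\sigma^3+O(|\theta|^5)$. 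Substituting,
\[
\omega(\xi^*+\theta)-w_*=\frac{\sigma}{w_*}-\frac{\tau}{6w_*}-\frac{\sigma^2}{2w_*^3}+\frac{\sigma^3}{2w_*^5}+O(|\theta|^4).
\]

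Finally, $v^*\cdot\theta=\sigma/w_*$ by definition of $v^*$. Subtracting as in \eqref{eq:phi*}, the linear terms cancel, which is consistent with $\nabla\phi_*(0)=0$. What remains is
\[
\phi_*(\theta)=\frac{\sigma^2}{2w_*^3}+\frac{\tau}{6w_*}-\frac{\sigma^3}{2w_*^5}+O(|\theta|^4),
\]
which is exactly \eqref{eq:phi* Tayler}.

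The only point needing care is the error bookkeeping. All discarded terms must be of total order at least four in $\theta$; this holds because every cross term mixing $\tau$ with $\sigma$ in $s^2$ or $s^3$ has order at least $4$. The remainder constants are also uniform for $|\theta|$ below a fixed small radius, since $\omega$ is real-analytic near $\xi^*$ where $\omega(\xi^*)=w_*>0$. I expect no real obstacle; the main risk is an arithmetic slip in the coefficients $1/8$ and $1/16$ after rescaling by powers of $w_*$.
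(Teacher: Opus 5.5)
Your proposal is correct and follows essentially the same route as the paper. You rewrite $\omega(\xi^*+\theta)^2=2d+2\sum_j\sin\theta_j$, expand the square root to third order (your $\varepsilon=2s/w_*^2$ is just the paper's $q=2s$ rescaled), substitute $\sin\theta_j=\theta_j-\theta_j^3/6+O(\theta_j^5)$, and cancel the linear terms against $v^*\cdot\theta$. Your coefficients and error bookkeeping are correct.
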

	
	\begin{proof}
		Since $\omega(\xi^*+\theta)^2=2d+2\sum_{j=1}^d\sin\theta_j$, Taylor's formula gives
		$$
		2\sum_j\sin\theta_j=2\sum_j\theta_j-\frac{1}{3}\sum_j\theta_j^3+O(|\theta|^5).
		$$
		Let $q=2\sum_{j=1}^d\sin\theta_j$.
		Using $(w_*^2+q)^{1/2}=w_*+\frac{q}{2w_*}-\frac{q^2}{8w_*^3}+\frac{q^3}{16w_*^5}+O(q^4)$
		and keeping terms up to third order, we obtain
		$$
		\omega(\xi^*+\theta)-w_*
		=\frac{1}{w_*}\sum_j\theta_j-\frac{1}{6w_*}\sum_j\theta_j^3-\frac{1}{2w_*^3}\left(\sum_j\theta_j\right)^2+\frac{1}{2w_*^5}\left(\sum_j\theta_j\right)^3+O(|\theta|^4).
		$$
		Combining it with \eqref{eq:phi*} and $v^*\cdot\theta=\frac{1}{w_*}\sum_j\theta_j$, we obtain~\eqref{eq:phi* Tayler} immediately.
	\end{proof}
	
	Then we give the asymptotic of the phase.
	\begin{lemma}\label{lem:limitphase}
		For every $\zeta\in\R^d$, as $t\to+\infty$,  $t\phi_*(A_t^{-1}\zeta)\longrightarrow\Phi_\infty(\zeta)$ and $\omega(\xi^*+A_t^{-1}\zeta)^{-1}\longrightarrow w_*^{-1}$. Moreover, both convergences are uniform on compact subsets of $\R^d$.
	\end{lemma}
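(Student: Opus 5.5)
We plan to substitute $\theta=A_t^{-1}\zeta$ into the Taylor expansion of Lemma~\ref{lem:phi* Tayler} and track the power of $t$ carried by each term. Write $\zeta=z+re$ with $z\in e^\perp$ and $r=e^\top\zeta$. Then
\[
A_t^{-1}\zeta=t^{-1/3}z+t^{-1/2}re,
\]
so $|A_t^{-1}\zeta|\le t^{-1/3}|\zeta|$. For $\zeta$ in a fixed compact set $K$ and $t$ large, $\theta$ is therefore small, uniformly in $K$, and the expansion applies.

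We first handle the quadratic term. Since $z\in e^\perp$, we have $\sum_j z_j=0$, and $\sum_j e_j=\sqrt d$. Hence
\[
\sum_j\theta_j=t^{-1/2}r\sqrt d,
\]
and so
\[
t\cdot\frac{1}{2w_*^3}\Bigl(\sum_j\theta_j\Bigr)^2=\frac{d}{2w_*^3}r^2,
\]
exactly, with no error. The cubic term $-\frac{1}{2w_*^5}(\sum_j\theta_j)^3$ becomes, after multiplying by $t$, a quantity of size $O(t\cdot t^{-3/2})=O(t^{-1/2})$, uniformly for $|r|\le C$.

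Next comes the term $\frac{1}{6w_*}\sum_j\theta_j^3$. Write $\theta_j=t^{-1/3}z_j+t^{-1/2}r/\sqrt d$ and expand each cube. The leading part is
\[
t\cdot t^{-1}\frac{1}{6w_*}\sum_j z_j^3=\frac{1}{6w_*}\sum_j z_j^3 .
\]
The cross terms carry the factors $t\cdot t^{-2/3}t^{-1/2}=t^{-1/6}$, $t\cdot t^{-1/3}t^{-1}=t^{-1/3}$ and $t\cdot t^{-3/2}=t^{-1/2}$. All of these tend to $0$ uniformly on $K$. Finally, the remainder satisfies
\[
t\,O(|\theta|^4)=O(t\cdot t^{-4/3}|\zeta|^4)=O(t^{-1/3}),
\]
again uniformly on $K$. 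Summing the pieces gives $t\phi_*(A_t^{-1}\zeta)\to\Phi_\infty(\zeta)$ uniformly on compacts.

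For the amplitude, $\omega$ is continuous near $\xi^*$ with $\omega(\xi^*)=w_*>0$, and $\xi^*+A_t^{-1}\zeta\to\xi^*$ uniformly on $K$ because $|A_t^{-1}\zeta|\le t^{-1/3}\sup_K|\zeta|$. Hence $\omega(\xi^*+A_t^{-1}\zeta)^{-1}\to w_*^{-1}$ uniformly, since $\omega$ stays bounded below near $\xi^*$.

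The only real point of care is confirming that the $O(|\theta|^4)$ constant in Lemma~\ref{lem:phi* Tayler} is uniform on a fixed small ball. This holds because $\phi_*$ is smooth (indeed analytic) near $0$, as $\omega>0$ there. With that in hand, the remaining steps are bookkeeping of the powers of $t$ above.
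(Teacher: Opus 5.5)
Your proposal is correct and follows essentially the same route as the paper: you decompose $\zeta=z+re$, use $\sum_j z_j=0$ to make the quadratic term exactly $\frac{d}{2w_*^3}r^2$, and bound the remaining pieces by $O(t^{-1/6})$, $O(t^{-1/2})$ and $O(t^{-1/3})$ uniformly on compacts, with the amplitude handled by continuity of $\omega$ near $\xi^*$. Your remark that the $O(|\theta|^4)$ constant is uniform on a fixed small ball, because $\phi_*$ is smooth near $0$, makes explicit a point the paper leaves implicit.
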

	
	\begin{proof}
		Every $\zeta\in\R^d$ decomposes uniquely as $\zeta=z+re$, $z=P_\perp\zeta\in e^\perp$, and $r=e\cdot\zeta$.
		By the definition of $A_t$,
		$	A_t^{-1}\zeta=t^{-1/3}z+t^{-1/2}re$.
		Since $z\in e^\perp$, $\sum_jz_j=0$.  
		It follows that $\sum_j(A_t^{-1}\zeta)_j=\sqrt{d}\,t^{-1/2}r$.
		Now we apply Lemma~\ref{lem:phi* Tayler} with $\theta=A_t^{-1}\zeta$  for sufficiently large $t$.
		So the first term in~\eqref{eq:phi* Tayler}  times $t$ equals $dr^2/(2w_*^3)$.  
		Also, the second term in \eqref{eq:phi* Tayler} times $t$ is
		$$
		\frac{t}{6w_*}\sum_{j=1}^d(t^{-1/3}z_j+t^{-1/2}r/\sqrt{d})^3
		=\frac{1}{6w_*}\sum_{j=1}^dz_j^3+O(t^{-1/6}).
		$$
		The term involving $(\sum_j\theta_j)^3$ is $O(t^{-1/2})$, and $tO(|\theta|^4)=O(t^{-1/3})$.  
		This proves $t\phi_*(A_t^{-1}\zeta)\\\to\Phi_\infty(\zeta)$.  
		The second one follows from continuity of $\omega$ and the fact that $A_t^{-1}\zeta\to0$. 
        Moreover, if $\zeta$ ranges over a fixed compact set, then $z$ and $r$ are uniformly bounded. 
        So both convergences are uniform.
	\end{proof}
	
	Using $\sin x=\frac{e^{ix}-e^{-ix}}{2i}$ in~\eqref{eq:St integral}, we have
	$S_t=\frac{1}{2i(2\pi)^d}(I_t^+-I_t^-)$,
	where, with $\theta_t(\zeta)=A_t^{-1}\zeta$,
	\begin{equation*}\label{eq:Ipm}
		I_t^\pm=\int_{\R^d}\widehat W(-\zeta)
		\frac{\exp\left(it[v^*\cdot\theta_t(\zeta)\pm\omega(\xi^*+\theta_t(\zeta))]\right)}
		{\omega(\xi^*+\theta_t(\zeta))}\,d\zeta.
	\end{equation*}
	Instead of estimating $S_t$ directly, we consider $I_t^\pm$ as follows.
	\begin{lemma}\label{lem:I}
		As $t\to+\infty$, $I_t^+=O(t^{-1/2})$ and $e^{itw_*}I_t^-\to C_W$
		for some constant $C_W\neq 0$.
	\end{lemma}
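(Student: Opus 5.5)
For $I_t^-$, I will first factor out the phase at $\xi^*$ and then pass to the limit by dominated convergence. By \eqref{eq:phi*}, $v^*\cdot\theta-\omega(\xi^*+\theta)=\phi_*(\theta)-w_*$, so
\[
e^{itw_*}I_t^-=\int_{\R^d}\widehat W(-\zeta)\,\frac{\exp\bigl(it\phi_*(A_t^{-1}\zeta)\bigr)}{\omega(\xi^*+A_t^{-1}\zeta)}\,d\zeta .
\]
The integrand is supported in the fixed compact ball $B(0,\delta)$ and is bounded uniformly in $t$, since $\omega$ is bounded below near $\xi^*$. By Lemma~\ref{lem:limitphase} it converges uniformly on $B(0,\delta)$ to $\widehat W(-\zeta)e^{i\Phi_\infty(\zeta)}w_*^{-1}$. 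Hence
\[
e^{itw_*}I_t^-\to C_W:=w_*^{-1}\int_{\R^d}\widehat W(-\zeta)e^{i\Phi_\infty(\zeta)}\,d\zeta .
\]
To see that $C_W\neq0$, I take the real part. For $|\zeta|\le\delta<\delta_2$, \eqref{eq:detla2} gives $|\Phi_\infty(\zeta)|\le\pi/4$, so $\cos\Phi_\infty(\zeta)\ge 1/\sqrt2$. Since $\widehat W\ge0$ and $\widehat W\not\equiv0$,
\[
\operatorname{Re}C_W\ge \frac{1}{\sqrt2\,w_*}\int\widehat W>0 .
\]

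For $I_t^+$, I undo the rescaling and write
\[
I_t^+=\det A_t\int_{\R^d}\widehat W(-A_t\theta)\,\frac{e^{it\psi(\theta)}}{\omega(\xi^*+\theta)}\,d\theta,\qquad \psi(\theta)=v^*\cdot\theta+\omega(\xi^*+\theta).
\]
The support is now $|A_t\theta|\le\delta$, so $|\theta|\le\delta t^{-1/3}\le\delta_1$. Here $\psi$ is non-stationary: by \eqref{eq:detla1}, $e\cdot\nabla\psi(\theta)\ge1$. I therefore integrate by parts once in the direction $e$, using $\partial_e e^{it\psi}=it(e\cdot\nabla\psi)e^{it\psi}$. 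This gives
\[
I_t^+=\frac{i}{t}\det A_t\int e^{it\psi}\,\partial_e\!\left[\frac{\widehat W(-A_t\theta)}{\omega(\xi^*+\theta)\,e\cdot\nabla\psi(\theta)}\right]d\theta .
\]
When the derivative falls on $\widehat W(-A_t\theta)$ it produces $A_te=t^{1/2}e$, so it costs a factor $t^{1/2}$. When it falls on the other, smooth factors it costs $O(1)$. Changing variables back, $\det A_t\int|\cdots|\,d\theta$ becomes an integral over $B(0,\delta)$ of bounded functions, and altogether $I_t^+=O(t^{-1}\cdot t^{1/2})=O(t^{-1/2})$.

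The key point is to integrate by parts along $e$ specifically, where the dilation is only $t^{1/2}$, rather than in a generic direction, which would cost $t^{1/3}$ in every perpendicular direction. This gives exactly the claimed rate. The main obstacle is bookkeeping: checking that all derivatives of $1/(\omega\,e\cdot\nabla\psi)$ stay bounded on the support, which follows from \eqref{eq:detla1} and smoothness of $\omega$ near $\xi^*$. Integrating by parts once more would give $O(t^{-1})$, but one step suffices here.
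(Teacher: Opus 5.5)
Your proposal is correct and is essentially the paper's argument. The treatment of $I_t^-$ is identical: dominated convergence via Lemma~\ref{lem:limitphase}, then $\operatorname{Re}C_W>0$ from $\cos\Phi_\infty\geq 1/\sqrt{2}$ on $B(0,\delta)$. For $I_t^+$, your single integration by parts along $e$ is the paper's non-stationary-phase step in the $r$-variable, where $|\partial_r\Psi_t|\geq t^{1/2}$ by \eqref{eq:detla1}; you do it by hand in the unscaled $\theta$-variables, while the paper fixes $z$ in the rescaled variables and invokes Lemma~\ref{lem:BKY} with $Y=1$, $R=t^{1/2}$, $A=1$.
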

	
	\begin{proof}
		Write $\zeta=z+re$ and set $\Psi_t(\zeta)=t\bigl[v^*\cdot\theta_t(\zeta)+\omega(\xi^*+\theta_t(\zeta))\bigr]$.
		Since $\theta_t(\zeta)=A_t^{-1}\zeta$, we have $\partial_r\theta_t=t^{-1/2}e$. 
		Then $\partial_r\Psi_t=t^{1/2}e^\top\bigl(v^*+\nabla\omega(\xi^*+\theta_t)\bigr)$.
		Because $\zeta\in\supp\widehat W\subset B(0,\delta)$ and the eigenvalues of $A_t$ are at least $1$, we have $|\theta_t|\leq|\zeta|<\delta$.  
		Therefore, our choice of $\delta$ and~\eqref{eq:detla1} gives $|\partial_r\Psi_t|\geq t^{1/2}$.
		For the oscillatory integral $I_t^+$, $\widehat W(-\zeta)\,\omega(\xi^*+\theta_t)^{-1}$ is supported in a fixed compact set, and all of its $r$-derivatives
		are uniformly bounded.  
		Since $\partial_r\theta_t=t^{-1/2}e$, we get
		$|\partial_r^j\Psi_t|\lesssim_j t^{1-j/2}\lesssim_j 1$ for $j\geq2$.
		Thus, for each fixed $z$, we can apply Lemma~\ref{lem:BKY} on $I_t^+$ in the $r$-variable with $Y=1$ and $R=t^{1/2}$.  Taking $A=1$ gives a uniform $O(t^{-1/2})$ bound.
		Finally, $z$ ranges over a fixed compact set, and hence
		$I_t^+=O(t^{-1/2})$.
		
		As for $I_t^-$, \eqref{eq:phi*} gives
		\begin{equation*}
			I_t^-=e^{-itw_*}\int_{\R^d}\widehat W(-\zeta)
			\frac{e^{it\phi_*(A_t^{-1}\zeta)}}{\omega(\xi^*+A_t^{-1}\zeta)}\,d\zeta.
		\end{equation*}
		On the fixed support of $\widehat W$, Lemma~\ref{lem:limitphase} gives uniform convergence of both phase and amplitude. Then the integrands are uniformly bounded and supported in one compact set. Therefore the dominated convergence theorem gives
		\begin{equation*}
			e^{itw_*}I_t^-\longrightarrow
			C_W:=\frac{1}{w_*}\int_{\R^d}\widehat W(-\zeta)e^{i\Phi_\infty(\zeta)}\,d\zeta.
		\end{equation*}
		By the choice of $\delta$ and~\eqref{eq:detla2}, we have
		$ |\Phi_\infty(\zeta)|\leq\frac{\pi}{4}$ for $\zeta\in B(0,\delta)$.
		Combined with $\supp \widehat W\subset B(0,\delta)$, then
		\begin{equation*}
			\operatorname{Re}C_W
			=\frac{1}{w_*}\int_{\R^d}\widehat W(-\zeta)\cos\Phi_\infty(\zeta)\,d\zeta
			\geq\frac{1}{\sqrt{2}w_*}\int_{\R^d}\widehat W(\zeta)\,d\zeta>0.
		\end{equation*}
		Thus $ C_W\neq0$.
	\end{proof}
	
	\begin{proof}[Proof of the lower bound in Theorem~\ref{thm:main}]
		Fix $d$. 
		By Lemma~\ref{lem:I},
		$$
		S_t=-\frac{e^{-itw_*}}{2i(2\pi)^d} C_W+o(1),
		\quad \text{as}~ t\to+\infty.
		$$
		Since $C_W\neq0$, $|S_t|$ is bounded below by a positive constant $c_d$ for all sufficiently large $t\geq T_d$. By Lemma~\ref{lem:total W},
		\[
		C_{\mathrm{sum}}t^{\sigma_d}\sup_{x\in\Z^d}|G_d(x,t)|\geq|S_t|\geq c_d,\quad t\geq T_d.
		\]
		Since $G_d(x,-t)=-G_d(x,t)$, the result follows.
	\end{proof}

\section*{Declaration on the Use of AI}
	The authors employed AI tools to help explore possible proof strategies, especially in the proof of Lemma \ref{lem:B bound}. 
    All mathematical claims, proofs, and citations were independently checked by the authors, who take full responsibility for any remaining errors.

\end{document}